%% file: Latest_Version.tex
\input{math_commands.tex}

\documentclass[12pt]{amsart}
\usepackage{amsmath,amsfonts,bm}
\usepackage{amsmath}
\usepackage{amssymb}
\usepackage{mathtools}

\usepackage{algorithm}
\usepackage[noend]{algpseudocode}

\usepackage{bm}
\usepackage{enumitem}
\usepackage{graphicx}
\usepackage{subcaption}
\usepackage{caption}
\usepackage{float}
\usepackage{hyperref}

\usepackage{parskip}
\usepackage{xcolor}

\usepackage{multirow}
\usepackage{booktabs}
\usepackage{tabularray}
\allowdisplaybreaks

\def\red{\textcolor{red}}
\def\blue{\textcolor{blue}}

\newtheorem{theorem}{Theorem}
\newtheorem{lemma}[theorem]{Lemma}

\theoremstyle{definition}

\theoremstyle{remark}

\numberwithin{equation}{section}

\allowdisplaybreaks

\numberwithin{equation}{section}
\renewcommand{\theequation}{\arabic{section}.\arabic{equation}}

\begin{document}
\title[inf--sup] 
{inf--sup NEURAL NETWORKS FOR HIGH DIMENSIONAL PDEs}
\author{Ziren Chen}
\address{Iowa State University, Department of Mathematics, Ames, IA 50011} 
\author{Hailiang Liu}
\address{Iowa State University, Department of Mathematics, Ames, IA 50011} \email{zc2m@iastate.edu, hliu@iastate.edu}

\subjclass{65K10, 68Q25} 
\keywords{High-dimensional PDEs, Inf--sup Formulation, Saddle-point Optimization, Deep Neural Networks}

\begin{abstract}
Solving partial differential equations (PDEs) in high dimensions remains challenging due to the curse of dimensionality. We propose a neural-network-based framework that reformulates PDEs as inf--sup optimization problems through the introduction of a Lagrange multiplier. The primal solution and the associated Lagrange multiplier are parameterized by two networks and are computed via an iterative saddle-point optimization procedure. We prove the theoretical equivalence between the proposed optimization formulation and the original PDE problem, and we derive rigorous error estimates that quantify the total approximation error in terms of the network approximation error, statistical (sampling) error, and optimization error. Numerical experiments demonstrate the accuracy, stability, and efficiency of the proposed method for solving high-dimensional PDEs.
\end{abstract}

\maketitle



\section{Introduction}
Partial differential equations (PDEs) play a central role in numerous areas of mathematics, physics, and engineering.  They provide a rigorous framework for modeling systems governed by physical laws and have found applications in diverse domains including  fluid dynamics, electromagnetism and material science. Traditional numerical methods for solving them  have achieved  high accuracy in low-dimensional settings but rely heavily on discretizing the state space. As the dimensionality increases, the number of grid points required for accurate approximations grows exponentially, leading to prohibitive computational costs \cite{li_liu_2002, tadmor_2012}.

Recent advances in neural network-based methods have emerged as promising alternatives for PDE solvers, leveraging the representational power of deep neural networks to approximate complex solutions in a mesh-free, data-efficient manner \cite{sirignano_dgm_2018, e_deep_2018, raissi_physics-informed_2019, zang_weak_2020, li_fourier_2021, lu_deeponet_2021,  liu_natural_2024, huo_inf-sup_2024}. Among these approaches, physics-informed neural networks (PINNs) \cite{raissi_physics-informed_2019} and their variants \cite{ kharazmi_zhang_karniadakis_2019, jagtap_karniadakis_2020, yu_gradient-enhanced_2022,mcclenny_self-adaptive_2023, de_ryck_mishra_molinaro_2024} have gained considerable attention for incorporating  physical laws directly into the training objective. Nevertheless, PINNs may exhibit stability issues and limited generalization in certain applications \cite{krishnapriyan_characterizing_2021,wang_understanding_2021, wang_when_2022}. To address these challenges,  weak-residual formulations have been explored, see e.g., \cite{kharazmi_zhang_karniadakis_2019, zang_weak_2020, huo_inf-sup_2024}. In particular, Inf-SupNet \cite{huo_inf-sup_2024} reformulate the PDE as a constrained optimization problem, where the boundary data mismatch is minimized subject to the governing equation. This formulation is then recast as an inf--sup problem through  the introduction of a Lagrange multiplier. Building on this perspective, the present work  develops a more general inf--sup neural framework applicable to a broader class of PDEs.

In this work, we introduce InfsupNet, a neural framework, for solving a broad range of PDEs by reformulating the solution process as an inf-sup optimization problem.
InfsupNet builds upon a variational characterization of PDEs, enabling the learning of solutions through a game-theoretic perspective. By expressing the PDE problem as a saddle-point optimization between primal and dual functions, this approach naturally leads to improved stability and convergence properties. Crucially, it avoids direct residual minimization, which can obscure the true dynamics of the solution process and introduce training pathologies. 

{From a theoretical standpoint, error analysis for inf--sup formulations is inherently more involved. For well-posed PDE problems, stability guarantees that the error is controlled by the residual. This connection has been explored in \cite{mishra_estimates_2023, de_ryck_error_2022} to derive generalization error bounds for PINNs. However, further relating the residual error to the saddle-point loss requires a different approach, as introduced in \cite{huo_inf-sup_2024} through InfSupNet for elliptic problems.  
In this work, building on the framework of  \cite{huo_inf-sup_2024}, we establish error bounds for linear well-posed  PDEs. Specifically, we decompose the total error into three components: the sampling error, the neural network approximation error, and the optimization gap. Each component is explicitly characterized and quantitatively estimated, yielding a transparent interpretation  of their respective  contributions. Furthermore, we derive upper bounds for these components in the case of linear convection–diffusion equations.}

\subsection{Related work.} 
In the following section, we briefly review representative neural PDE solvers, categorized by their underlying formulations, and present related results on error bounds.

\textbf{Residual-minimization formulations.} Early neural approaches \cite{MN1994,LAD1998} based on residual minimization were typically limited to a single  hidden layer, with boundary conditions imposed manually via a change of variables. More recent methods adopt deeper architectures, such as the Deep Galerkin Method (DGM) \cite{sirignano_dgm_2018}, physics-informed neural networks (PINNs) \cite{raissi_physics-informed_2019}. Both approaches typically minimize a combination of  the squared PDE residuals and data mismatch, which can implicitly lead to higher-order equations and difficulties when the solution exhibits low regularity. Moreover, they often require careful manual tuning to balance residual and boundary losses. To address these issues, several extensions have been proposed, including gradient-enhanced PINNs (gPINNs) \cite{yu_gradient-enhanced_2022}, variational PINNs (VPINNs) \cite{kharazmi_zhang_karniadakis_2019}, and self-adaptive PINNs \cite{mcclenny_self-adaptive_2023} with trainable loss weights. 

\textbf{Variational/Ritz formulations.} 
The Deep Ritz Method (DRM) \cite{e_deep_2018} minimizes a variational (energy) functional associated with the underlying PDE, 
rather than the residual itself. This formulation is particularly well suited to problems that admit a variational structure, such as elliptic equations, and often yields improved stability and convergence compared to residual-based approaches like PINNs when a well-defined energy exists. {Further developments along this line include DRM with adaptive quadrature \cite{liu_cai_ramani_2023},  DRM with Fourier feature mapping \cite{mema_wang_knap_2025}, iterative DRM \cite{hu_jin_wang_2025}, and DRM-PINN hybrid approaches \cite{zhou_wang_deng_li_2025}.}

{\textbf{Weak-residual formulations.} Methods such as VPINNs \cite{kharazmi_zhang_karniadakis_2019}, Weak Adversarial Networks (WANs) \cite{zang_weak_2020}, Inf-SupNet \cite{huo_inf-sup_2024}  are closely related through their use of weak or saddle-point formulations of PDEs.  VPINNs and WANs are both grounded in the weak form of the governing PDEs. VPINNs evaluate this weak residual against predefined test spaces (e.g.,piecewise polynomial functions) using numerical quadrature, whereas WANs parameterize both the primal solution and the test functions with neural networks, leading to a min–max adversarial optimization that minimizes an operator norm. In contrast, other approaches explicitly construct saddle-point problems. Inf-SupNet \cite{huo_inf-sup_2024} enforces the PDE as a constraint by introducing a Lagrange multiplier, thereby reformulating the problem as an inf-sup optimization. PDHG-based neural solvers \cite{liu_natural_2024} explore suitable preconditioning toward  a natural primal-dual gradient ascent--descent scheme with improved computational efficiency.}

\textbf{Operator learning. }A newer class of neural methods shifts the focus from solving a single equation to learning solution operators that generalize across families of parameters or initial conditions; for instance,  the Fourier Neural Operator (FNO) \cite{li_fourier_2021} and DeepONet \cite{lu_deeponet_2021}. While enabling fast inference, they typically require large datasets of precomputed solutions for training. {To address this limitation, Wang et al. (Wang, Wang, and Perdikaris \cite{wang_wang_perdikaris_2021} Wang and Perdikaris \cite{wang_perdikaris_2023}) introduced the physics-informed DeepONet, in which the loss function is constructed solely 
from physical constraints, thus making the model self-supervised. The approach is further extended through  transfer learning techniques \cite{xu_lu_wang_2023}.}




{\textbf{Neural solvers for density evolution.} Mass-conserving PDEs, 
such as the Fokker-Planck equation,  Wasserstein gradient flows, and related mean-field models, are typically subject to structural constraints, including non-negativity and mass conservation of the evolved density. The neural PDE solvers discussed above do not directly enforce these constraints, making their application to such problems challenging. To address this issue, several neural approaches have been developed. One line of work applies deep learning to the time-discretized JKO scheme \cite{JKO98} for solving Wasserstein gradient flows; see e.g., \cite{MKel21, ASM21, FTC21}. These methods transport the initial density through a sequence of neural networks and hence avoid space discretization. However, the total training cost may scale quadratically with the number of JKO steps. Another line of work develops parametric approximations based on push-forward maps, neural networks, and the Wasserstein metric, for both Fokker-Planck equations \cite{LLZZ19, LLZZ22} and for Wasserstein Hamilton flows \cite{WLYZ25}. More recently, several works have employed  velocity-matching ideas to construct flows driven by learned velocity fields.
These include score-based methods,  which parameterize the probability flow through the score function for Fokker--Planck equations \cite{boffi_probability_2023, lu_score-based_2024, ZOL25a, hu_zhang_karniadakis_kawaguchi_2025}, score-based neural ODEs for mean field control problems \cite{ZOL25b}, and self-consistent velocity matching methods \cite{shen_self-consistency_2022, li_self-consistent_2023, shen_wang_2023}.}

\textbf{Error bounds.} {Recent work has made substantial progress in establishing  error bounds for neural network-based PDE solvers. For PINNs, Mishra and Molinaro \cite{mishra_estimates_2023} developed an a posteriori framework that leverages the stability properties of the underlying PDE to relate generalization error to both training error and sampling complexity; this approach was further refined by De Ryck and Mishra \cite{de_ryck_error_2022} for linear parabolic equations. For DRM,  Lu, Lu and Wang \cite{lu_priori_2021} derived an a priori error bounds based on the variational structure of elliptic PDEs, while  Minakowski and Richter \cite{minakowski_priori_2023}  established finite element-type estimates incorporating approximation and quadrature errors.}  

{Despite these advances, error analysis for weak-residual formulations remains relatively limited. For WANs, Bertoluzza, Burman, and He \cite{bertoluzza_burman_he_2024} showed that the error can be controlled by the approximation properties of  the trial space  under suitable stability conditions, while Jiao et al. \cite{jiao_convergence_2023} incorporated statistical errors, though with rates deteriorating in the increasing spatial dimension. 
More recently, Liu, Osher, and Li \cite{liu_natural_2024} established convergence results for time-continuous formulations.} 
{In contrast, \cite{huo_inf-sup_2024} introduced a novel error decomposition for InfSupNet applied to  elliptic problems, enabling   simultaneous control of both the primal variable and the Lagrange multiplier. 
The present analysis can be viewed as a natural extension of this framework}.

\subsection{Problem Formulation and Inf--Sup Framework}
We extend our prior work \cite{huo_inf-sup_2024} to a broader class of PDEs. Let $Q \subset \mathbb{R}^d$ ($d>1$) be a bounded domain with boundary $\partial Q$, and let $\Gamma \subset \partial Q$ denote the portion where boundary conditions are imposed. Let $X$ be a Banach space on $Q$, and let $Z$ and $P$ be Hilbert and Banach spaces defined on $Q$ and $\Gamma$, respectively. Given $f\in Z$ and $g\in P$, consider
\begin{align}\label{eq01}
\begin{split}
\mathcal{F}[\xi,u] &= f \quad \text{in } Q, \\
\mathcal{B}u &= g \quad \text{on } \Gamma, 
\end{split}
\end{align}
where $\mathcal{F}: Q \times X \to Z$ is a differential operator and $\mathcal{B}: X \to P$ is a boundary operator. Assuming the problem is well-posed, we reformulate is as the constrained minimizaiton problem 
\begin{align}\label{eq02}
\inf_{u\in U}
\frac12\,\mathrm{dist}(\mathcal{B}u,g)^2,
\qquad
U=\{u\in X;\ \mathcal{F}[\xi,u]=f\text{ in }Q\},
\end{align}
where $\mathrm{dist}(\cdot,\cdot)$ denotes a suitable boundary mismatch functional. Introducing a Lagrange multiplier $v\in Z^*$ leads to the equivalent inf--sup formulation
\begin{align}\label{eq03}
\inf_{u\in X}\sup_{v\in Z^*}
\mathcal{L}(u,v)
:=
\frac12\,\mathrm{dist}(\mathcal{B}u,g)^2
+\langle\mathcal{F}[\xi,u]-f,v\rangle_{Z,Z^*}.
\end{align}
This formulation provides a unified optimization framework that extends InfSupNet to a broader range of PDEs. For numerical approximation, we introduce neural network hypothesis spaces $\mathcal{U}_\theta \subset X$ and $\mathcal{V}_\tau \subset Z^*$ for the primal and dual variables. Let $\widehat{\mathcal{L}}$ denote a discrete approximation of $\mathcal{L}$ obtained via sampling in $Q$ and $\Gamma$. The resulting parameterized 
min--max problem is
\begin{align}\label{eq04}
\min_{u_\theta \in \mathcal{U}_\theta}
\max_{v_\tau \in \mathcal{V}_\tau}
\widehat{\mathcal{L}}(u_\theta,v_\tau).
\end{align}
Our goal is to compute a saddle point $(\theta^*,\tau^*)$ such that 
\begin{align}\label{eq05}
\widehat{\mathcal{L}}(u_{\theta^*},v_{\tau^*})
=
\min_{u_\theta}\max_{v_\tau}
\widehat{\mathcal{L}}(u_\theta,v_\tau).
\end{align}
Algorithmic details are provided in Section~3.

\subsection{Theoretical Foundations}

The development of neural PDE solvers relies on several theoretical components: sampling-based discretization, function-space approximation, and optimization. 

Sampling and numerical integration are central to both efficiency and accuracy. Unlike classical mesh-based methods, many neural approaches rely on random or quasi-random sampling of the computational domain, which is particularly well suited for high-dimensional problems. Monte Carlo and quadrature-based schemes have been extensively studied in the context of mesh-free PDE solvers, including their effects on stability and accuracy \cite{caflisch_monte_1998}. Numerical integration for fractional Sobolev norms, relevant in the boundary mismatch terms in our formulation, has been analyzed in \cite{bonito_convergence_2025,mishra_priori_2025}. These results provide theoretical justification for the sampling strategy adopted in our empirical loss.

Understanding the approximation properties of neural networks in Sobolev-type spaces is crucial for characterizing the hypothesis classes $\mathcal{U}_\theta$ and $\mathcal{V}_\tau$. Classical universal approximation results \cite{cybenko_approximation_1989} establish density in continuous function spaces, while more recent work extends these guarantees to Sobolev and Sobolev–time spaces \cite{de_ryck_approximation_2021,abdeljawad_approximations_2022}. These results yield quantitative error bounds that are directly relevant to our approximation error analysis. 

Finally, since our formulation leads to a min--max problem, the convergence of saddle-point algorithms play a key role. Gradient-based methods for convex--concave problems are well understood \cite{nemirovski_prox-method_2004}, and recent advances address weaker convexity assumptions and certain nonconvex settings \cite{lin_gradient_2020,jin_what_2020,razaviyayn_nonconvex_2020}. These works guide the algorithmic design and convergence discussion presented in Section~3.

\subsection{Main Contributions}

The main contributions of this paper are as follows:
\begin{itemize}
    \item We extend the inf--sup framework introduced in \cite{huo_inf-sup_2024} to a neural network-based solver applicable to a broad class of PDEs, including both stationary and time-dependent problems.
    \item We establish the theoretical equivalence between the proposed inf--sup formulation and the original PDE problem under appropriate well-posedness assumptions.
    \item For time-dependent linear convection-diffusion equations, we develop a rigorous inf--sup error analysis for neural network approximations of PDEs. In particular, we prove that the saddle-point error admits a sharp decomposition into three fundamental components: the neural network approximation error $I_{\mathrm{NN}}$, the Monte Carlo sampling error $I_{\mathrm{MC}}$, and the optimization gap $I_{\mathrm{GP}}$. Specifically, for the iterates $(u_n,v_n)$ generated by the proposed inf--sup training algorithm, we establish error bounds of the form
    \[
    \|u_n-u\|_X^2 + \|v_n-v\|_{Z^*}^2 \;\le\; C\,(I_{\mathrm{NN}} + I_{\mathrm{MC}} + I_{\mathrm{GP}}),
    \]
    where each term is explicitly characterized and quantitatively estimated.
    \item We validate the proposed method through numerical experiments, demonstrating its accuracy and stability for representative PDE problems.
\end{itemize}

\subsection{Organization of the Paper}

The remainder of this paper is organized as follows. Section~2 introduces the necessary mathematical preliminaries and formulates the inf--sup framework for general PDE problems. Section~3 describes the neural network architecture and the corresponding training algorithm based on the inf--sup formulation. Section~4 presents a rigorous error decomposition and establishes theoretical guarantees for the proposed method. In Section~5, we derive explicit upper bounds for each component of the approximation error, including results under reduced regularity assumptions. Section~6 reports numerical experiments that demonstrate the accuracy and stability of the method. Finally, Section~7 concludes the paper and outlines directions for future research.


\section{Problem Setup and Reformulation}  

In this section, we develop an abstract analytical framework for the class of PDE problems under consideration.  We begin by  formulating the PDE as a constrained optimization problem. Subsequently,  by introducing an appropriate Lagrange multiplier, we recast the problem into an equivalent inf–sup (saddle-point) formulation. We rigorously prove the equivalence between the original PDE problem and the constrained optimization problem. Moreover, we establish the solutions to the inf--sup formulation correspond exactly to solutions of the underlying PDE, thereby demonstrating the full equivalence of all three formulations.

\subsection{Representative PDE Examples}

This general formulation \eqref{eq01} covers a wide range of PDE problems, including both time-independent problems where  $\xi = x$ represents spatial variables, and time-dependent problems where $\xi = (\mathbf{x},t)$ represents spatial and temporal variables. 

Representative examples include:
\begin{itemize}
\item \textbf{Poisson equation} with Dirichlet boundary conditions (steady-state case):
\[
\mathcal{F}[x, u] = -\Delta u = f(x), \quad \text{in } \Omega ,
\]
with boundary operator 
\[
\mathcal{B}u(x) = u(x) = g(x), \quad \text{on } \Gamma = \partial \Omega.
\]
\item \textbf{Convection-diffusion equation}(time-dependent):
  \[
  \mathcal{F}[x,t, u] = u_t + \mathbf{b} \cdot \nabla u - \varepsilon \Delta u = f(x,t), \quad \text{in } \Omega \times (0, T),
  \]
  with boundary and initial conditions:
   \[
    \mathcal{B}u(x, t)  = u(x,t) =
    \begin{cases}
       g(x,t), & \quad \text{on } \partial \Omega \times (0, T), \\
        h(x), & \quad \text{in } \Omega\times\{t = 0\}.
    \end{cases}
    \]
  \item \textbf{Reaction-diffusion equation:} (time-dependent):
  \[
  \mathcal{F}[\xi, u] = u_t - d \Delta u - k u^2 = 0, \quad \text{in } \Omega \times (0, T),
  \]
  with boundary and initial conditions:
   \[
    \mathcal{B}u(x, t)  = u(x,t) =
    \begin{cases}
       g(x,t), & \quad \text{on } \partial \Omega \times (0, T), \\
        h(x), & \quad \text{in } \Omega\times\{t = 0\}.
    \end{cases}
    \]
\end{itemize}

\subsection{Problem Reformulation and Equivalence} 
To solve the PDE problem \eqref{eq01}, we reformulate it as a constraint optimization problem \eqref{eq02}, in which the distance functional $\mathrm{dist}(\cdot,\cdot)$ is a metric. 

We now establish the equivalence between the original PDE problem \eqref{eq01} and the constrained optimization problem \eqref{eq02}.  
\begin{theorem}\label{thm:1}
Assume the PDE problem \eqref{eq01} is well-posed. Then $u \in X$ solves \eqref{eq01} if and only if $u \in X$ solves the constraint optimization problem \eqref{eq02}.
\end{theorem}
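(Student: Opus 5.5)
The argument uses only two facts. First, $\mathrm{dist}(\cdot,\cdot)$ is a metric on $P$, so $\tfrac12\,\mathrm{dist}(\mathcal{B}u,g)^2\ge 0$, with equality exactly when $\mathcal{B}u=g$. Second, well-posedness of \eqref{eq01} guarantees that a solution $u^\ast\in X$ exists and that it is unique. The plan is to show that the optimal value of \eqref{eq02} is $0$, and that it is attained exactly at the functions satisfying both the interior equation and the boundary condition.

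\emph{Forward direction.} Let $u\in X$ solve \eqref{eq01}. Then $\mathcal{F}[\xi,u]=f$ in $Q$, so $u\in U$. Also $\mathcal{B}u=g$ on $\Gamma$, so the objective at $u$ is $\tfrac12\,\mathrm{dist}(g,g)^2=0$. Since the objective is nonnegative on all of $U$, $u$ attains the infimum and therefore solves \eqref{eq02}.

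\emph{Reverse direction.} Let $u\in U$ be a minimizer of \eqref{eq02}. By existence, the solution $u^\ast$ of \eqref{eq01} lies in $U$ and gives objective value $0$, as shown above. Minimality of $u$ then gives
\[
0\le \tfrac12\,\mathrm{dist}(\mathcal{B}u,g)^2\le \tfrac12\,\mathrm{dist}(\mathcal{B}u^\ast,g)^2=0 .
\]
Hence $\mathrm{dist}(\mathcal{B}u,g)=0$. The identity-of-indiscernibles property of the metric gives $\mathcal{B}u=g$ on $\Gamma$. Together with $u\in U$, this shows that $u$ solves \eqref{eq01}. Uniqueness then gives $u=u^\ast$, so the minimizer of \eqref{eq02} is unique as well.

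The only delicate step is the reverse direction. It needs existence of a PDE solution, supplied by well-posedness, to certify that the infimum equals $0$ and is attained. Without existence, a minimizer could have strictly positive boundary mismatch. We also need $\mathrm{dist}$ to be a genuine metric on $P$, not a pseudometric, so that zero distance forces $\mathcal{B}u=g$ as elements of $P$.
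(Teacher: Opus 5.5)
Your proof is correct and follows essentially the same route as the paper. The forward direction uses nonnegativity of the objective, and the reverse direction compares against the existing solution $u^\ast$ to force $\mathrm{dist}(\mathcal{B}u,g)=0$, hence $\mathcal{B}u=g$; uniqueness then gives $u=u^\ast$.
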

\begin{proof}
$(\Longrightarrow)$
Suppose $u\in X$ solve the PDE problem \eqref{eq01}. Then 
$u\in U$ and $Bu=g$, so  
\begin{align*}
\mathrm{dist}(\mathcal{B}u,g)^2 = 0.
\end{align*}
Since the objective is non-negative for all $w\in U$, this implies that $u$ minimizes the objective and solves \eqref{eq02}.

$(\Longleftarrow)$
Conversely, suppose $u$ solves the optimization problem \eqref{eq02}. Then 
\begin{align}\label{eq08}
    \mathrm{dist}(\mathcal{B}u,g)^2 \le \mathrm{dist}(\mathcal{B}w,g)^2 \quad \forall w\in U.
\end{align}
Let $u^* \in X$ be the unique solution to the well-posed PDE problem \eqref{eq01}. Then $u^* \in U$ and $\mathcal{B}u^*=g$, so 
$$
\mathrm{dist}(\mathcal{B}u,g)^2 \le \mathrm{dist}(\mathcal{B}u^*,g)^2 = 0 \Rightarrow \mathrm{dist}(\mathcal{B}u,g)^2=0.
$$
Hence $Bu=g$, and since $u\in U$, we have $F[\xi, u]=f$. 
By uniqueness, $u = u^*$, and thus $u$ solves \eqref{eq01}.
\end{proof}

To eliminate the explicit PDE constraint in~\eqref{eq02}, we introduce a Lagrange multiplier $v \in Z^*$, where $Z^*$ denotes the dual space of $Z$. This yields the inf--sup (saddle-point) optimization problem \eqref{eq03}.

The following theorem establishes the equivalence between the saddle points of ~\eqref{eq03} and the solutions of the original PDE problem ~\eqref{eq01}. 
\begin{theorem}
Assume the PDE problem \eqref{eq01} is well-posed. Then the following statements hold:

(1) If $u \in X$ is the unique solution to the PDE problem \eqref{eq01}, then $(u,v=0) \in X\times Z^*$ is a saddle point of $\mathcal{L}$ defined in \eqref{eq03}. That is,
$\mathcal{L}(u,\tilde v) \leq \mathcal{L}(u,0) \leq \mathcal{L}(\tilde u, 0) \quad \forall \tilde u \in X,\, \forall \tilde v \in Z^*$.

(2) Conversely, if $(u, v) \in X \times Z^* $ is a saddle point of $\mathcal{L}(u,v)$ as defined in \eqref{eq03}, then $v = 0$ and $u\in X$ is the solution to the PDE problem \eqref{eq01}.
\end{theorem}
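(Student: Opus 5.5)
The plan is to verify the two saddle-point inequalities directly in part (1). In part (2) I would extract, in turn, the PDE constraint from the $\sup$ in $v$, the boundary condition from the $\inf$ in $u$ (comparing with the true solution), and finally $v=0$ by testing against perturbed well-posed problems. Throughout, ``well-posed'' is used in the sense that for every right-hand side in $Z$ (with the given boundary data $g$) problem \eqref{eq01} has a unique solution in $X$. This reading is an assumption I impose here, and it is what the last step needs.

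\textbf{Part (1).} Let $u$ be the solution of \eqref{eq01}, so $\mathcal{F}[\xi,u]=f$ and $\mathcal{B}u=g$. For any $\tilde v\in Z^*$, the pairing term vanishes and $\mathrm{dist}(\mathcal{B}u,g)=0$, hence $\mathcal{L}(u,\tilde v)=0=\mathcal{L}(u,0)$. For any $\tilde u\in X$ we have $\mathcal{L}(\tilde u,0)=\tfrac12\mathrm{dist}(\mathcal{B}\tilde u,g)^2\ge 0$. These two facts are exactly the claimed chain of inequalities.

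\textbf{Part (2), constraint and boundary condition.} Let $(u,v)$ be a saddle point.
\begin{itemize}
\item The first saddle inequality gives $\langle \mathcal{F}[\xi,u]-f,\tilde v - v\rangle\le 0$ for all $\tilde v\in Z^*$. Since $\tilde v-v$ ranges over all of $Z^*$, a linear functional bounded above on a vector space must vanish. Thus $\langle \mathcal{F}[\xi,u]-f,w\rangle=0$ for all $w\in Z^*$.
\item Because $Z$ is Hilbert (Riesz, or Hahn--Banach in general), $Z^*$ separates points, so $\mathcal{F}[\xi,u]=f$, i.e. $u\in U$. Consequently $\mathcal{L}(u,v)=\tfrac12\mathrm{dist}(\mathcal{B}u,g)^2$.
\item The second saddle inequality with $\tilde u=u^*$, the solution of \eqref{eq01}, gives $\mathcal{L}(u,v)\le\mathcal{L}(u^*,v)=0$. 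Hence $\mathrm{dist}(\mathcal{B}u,g)=0$, so $\mathcal{B}u=g$ because $\mathrm{dist}$ is a metric.
\item By uniqueness, $u=u^*$; equivalently, Theorem~\ref{thm:1} applies.
\end{itemize}

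\textbf{Part (2), the multiplier.} It remains to show $v=0$, which I expect to be the main obstacle: it is the only place where the inequality $\mathcal{L}(u,v)\le\mathcal{L}(\tilde u,v)$ must be exploited for competitors $\tilde u$ outside $U$. We now know $\mathcal{L}(u,v)=0$, so
\[
\tfrac12\mathrm{dist}(\mathcal{B}\tilde u,g)^2+\langle\mathcal{F}[\xi,\tilde u]-f,v\rangle\ge 0 \quad \text{for all } \tilde u\in X.
\]
Fix any $r\in Z$ and $s\in\mathbb{R}$. By well-posedness there is $\tilde u_s\in X$ with $\mathcal{F}[\xi,\tilde u_s]=f+sr$ and $\mathcal{B}\tilde u_s=g$. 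Substituting $\tilde u_s$ kills the boundary term and yields $s\langle r,v\rangle\ge 0$ for every $s\in\mathbb{R}$. Taking both signs of $s$ gives $\langle r,v\rangle=0$ for all $r\in Z$, hence $v=0$.

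If only solvability for the specific datum $f$ were assumed, this step would fail. I would therefore state explicitly that well-posedness is understood as solvability for all data in $Z$, which covers the linear examples listed above.
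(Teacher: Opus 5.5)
Your proof is correct and follows essentially the paper's route. Part (1) is identical. In part (2) you obtain $\mathcal{F}[\xi,u]=f$ from the unboundedness of the pairing in $v$, and $\mathcal{B}u=g$ by comparison with the true solution, which is the content of Theorem~\ref{thm:1}. You get $v=0$ by solving the perturbed problem $\mathcal{F}[\xi,\tilde w]=f+h$, $\mathcal{B}\tilde w=g$, exactly as the paper does with an $h$ satisfying $\langle h,q\rangle<0$.

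The differences are presentational but welcome. You argue directly from the two saddle-point inequalities rather than from the value functions $\sup_q\mathcal{L}(w,q)$ and $\inf_w\mathcal{L}(w,q)$; the paper's part (2) uses those inequalities implicitly while nominally defining a saddle point only by $\inf\sup=\sup\inf$. You also state explicitly the solvability-for-all-right-hand-sides assumption that the paper invokes tacitly.
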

\begin{proof}
(1) Suppose $u \in X$ is the unique solution to the PDE problem \eqref{eq01}. Then, $\forall \tilde{u} \in X$, we have 
\[
\mathcal{L}(\tilde{u}, 0) = \frac{1}{2}\mathrm{dist}(\mathcal{B}\tilde{u},g)^2 + \langle F - f, 0\rangle_{Z,Z^*} = \frac{1}{2}\mathrm{dist}(\mathcal{B}\tilde{u},g)^2 \ge 0 = \mathcal{L}(u, 0).
\]
Similarly, $\forall \tilde v \in Z^*$, we have: 
\[
\mathcal{L}(u, \tilde{v}) = \frac{1}{2}\mathrm{dist}(\mathcal{B}u,g)^2 + \langle F - f, \tilde{v}\rangle_{Z,Z^*} = 0,
\]
since $u$ satisfies the PDE constraint and thus $Bu=g$ and $F=f$. Therefore, 
$$
\mathcal{L}(u, \tilde{v})=0=\mathcal{L}(u, 0),
$$
which confirms that $(u,0)$ is a saddle point of $\mathcal{L}$.

(2) Conversely, suppose $(u, v) \in X \times Z^* $ is a saddle point of $\mathcal{L}$, i.e.,
$$
\mathcal{L}(u,v) = \inf_{w \in X} \sup_{q \in Z^*} \mathcal{L}(w,q) =  \sup_{q \in Z^*} \inf_{w \in X} \mathcal{L}(w,q),
$$
where
$$
\mathcal{L}(w, q) = \frac{1}{2}\mathrm{dist}(\mathcal{B}w,g)^2 + \langle F - f, q\rangle_{Z,Z^*}.
$$
If $F - f \neq 0$, then the second term $\langle F - f, q\rangle_{Z,Z^*}$ can become arbitrarily large in magnitude due to the unbounded nature of $q \in Z^*$. Therefore, 
\[
\sup_{q \in Z^*} \{\mathcal{L}(w,q)\}  =
\begin{cases}
   \frac{1}{2}\mathrm{dist}(\mathcal{B}w,g)^2, & \text{if } F - f = 0, \\
    \infty, & \text{Otherwise}.
\end{cases}
\]
It is clear that the infimum is achieved only when $\mathcal{F}[\xi, w] = f(\xi) \text{ in } Q$, i.e., when $w \in U$. Thus, problem \eqref{eq03} reduces to:
\[
\inf_{w \in U}\Big\{\frac{1}{2}(\mathrm{dist}(\mathcal{B}w,g)^2\Big\},
\] 
where $U = \{w\in X;\  \mathcal{F}[\xi, w] = f (\xi) \text{ in } Q\}$. This is precisely the constraint optimization problem defined in \eqref{eq02}. By Theorem~\ref{thm:1},  we conclude that $u$ is the solution to the PDE problem \eqref{eq01}.

Next, we  show that $v = 0$. For any $q \in Z^*$ with $q \neq 0$, there exists $ h \in Z$ such that $\langle h, q\rangle_{Z,Z^*} < 0$. Then there exists a unique $\tilde{w} \in X$ such that
\begin{align*}
\mathcal{F}[\xi, \tilde{w}] - f &= h \quad \text{in } Q, \\
\mathcal{B}\tilde{w} &= g \quad \text{on } \Gamma.
\end{align*}
Therefore,
\[
\begin{cases}
   \inf_{w \in X} \{\mathcal{L}(w,q)\}  = 0, & \text{if } q = 0, \\
    \inf_{w \in X} \{\mathcal{L}(w,q)\}  \le \mathcal{L}(\tilde{w},q) = \langle h, q\rangle_{Z,Z^*} < 0, & \text{if } q \neq 0.
\end{cases}
\]
Hence, the supremum over $q\in Z^*$ 
is achieved only when $q = 0$, which implies  $v=0$.  Furthermore, this shows that $(u,0)$ is the unique saddle point of $\mathcal{L}$.
\end{proof}



\section{Algorithm}

\subsection{Neural Network Approximation and Inf--Sup Networks}

To approximate the primal solution $u$ and the dual variable $v$ in the inf--sup formulation \eqref{eq03}, we introduce neural network hypothesis spaces $\mathcal{U}_\theta \subset X$ and $\mathcal{V}_\tau \subset Z^*$ parameterized by trainable parameters $\theta \in \mathbb{R}^p$ and $\tau \in \mathbb{R}^q$, respectively. The primal network $u_\theta \in \mathcal{U}_\theta$ approximates the PDE solution $u \in X$, while the dual network $v_\tau \in \mathcal{V}_\tau$ approximates the Lagrange multiplier $v \in Z^*$.

Embedding the neural networks directly into the function spaces $X$ and $Z^*$ is essential, as the loss functional involves differential operators acting on $u$ as well as duality pairings with $v$. 
Classical approximation theory ensures that sufficiently wide and deep feedforward neural networks achieve arbitrary accuracy in approximating functions in Sobolev spaces; see, for example, \cite{de_ryck_approximation_2021,abdeljawad_approximations_2022} for recent developments on approximation in Sobolev norms. Consequently, 
the expressive capacity of $\mathcal{U}_\theta$ and $\mathcal{V}_\tau$ is governed  by the network architecture --  specifically, its depth, width, and choice of activation functions.


Under this  parameterization, the inf--sup problem \eqref{eq03} is approximated by the finite-dimensional min--max problem
\begin{align}\label{eq10}
\min_{u_\theta \in \mathcal{U}_\theta}
\max_{v_\tau \in \mathcal{V}_\tau}
\mathcal{L}(u_\theta,v_\tau),
\end{align}
which can be solved numerically using saddle-point optimization algorithms.


\subsection{Sampling and Empirical Loss}

To evaluate the loss function numerically, we approximate all integrals using sampled points. Interior and boundary contributions are handled  separately, reflecting the different norms involved in the loss functional. Let $\{\xi_n^{\mathrm{int}}\}_{n=1}^{N_{\mathrm{int}}} \subset Q$ and $\{\xi_n^{\mathrm{bd}}\}_{n=1}^{N_{\mathrm{bd}}} \subset \Gamma$ denote  the interior and boundary sampling points, respectively. For example, suppose  
$$\mathrm{dist}(u,g)=\|u-g\|_{L^2(\Gamma)}, \quad  Z=Z^*=L^2(Q)
$$ 
with duality pairing 
$$
\langle u,v\rangle_{Z,Z^*} = \int_Q u(\xi)v(\xi)\,d\xi.
$$
Then the empirical loss is given by 
\begin{align*}
\widehat{\mathcal{L}}(u_\theta,v_\tau)
&=
\frac{|\Gamma|}{2N_{\mathrm{bd}}}
\sum_{n=1}^{N_{\mathrm{bd}}}
\big(
\mathcal{B}u_\theta(\xi_n^{\mathrm{bd}})
-
g(\xi_n^{\mathrm{bd}})
\big)^2
\\
&\quad
+
\frac{|Q|}{N_{\mathrm{int}}}
\sum_{n=1}^{N_{\mathrm{int}}}
\big(
\mathcal{F}(\xi_n^{\mathrm{int}},u_\theta(\xi_n^{\mathrm{int}}))
-
f(\xi_n^{\mathrm{int}})
\big)
v_\tau(\xi_n^{\mathrm{int}}).
\end{align*}
The training objective is therefore to compute a saddle point
\[
(\theta^*,\tau^*)
=
\arg\min_\theta \max_\tau
\widehat{\mathcal{L}}(u_\theta,v_\tau).
\]

\subsection{Inf--Sup Solvers for PDEs} 
The resulting training problem is a saddle-point optimization problem. A natural method for solving such problems is the gradient descent--ascent (GDA) algorithm, which alternates between minimizing with respect to $\theta$ and maximizing with respect to $\tau$. 
Under suitable smoothness assumptions and appropriate step-size choices, multi-step ascent--descent schemes can improve stability and convergence behavior \cite{nouiehed_solving_2019}. 

In this work, we adopt an alternating GDA strategy consisting of $k$ ascent steps for the dual variables followed by $l$ descent steps for the primal variables, as summarized in Algorithm~\ref{alg:inf_sup}. The parameters $k$ and $l$ can be tuned empirically depending on the PDE and the network architecture.

\begin{algorithm}[htbp]
\caption{Training Procedure for inf--sup Network}\label{alg:inf_sup}
\begin{flushleft}
\textbf{Input:} 
\begin{itemize}
    \item $N_{int}, N_{bd}$: number of interior and boundary sampling points in $Q$ and $\Gamma$
    \item $k, l$: numbers of gradient descent  and ascent steps
    \item $\theta_0, \tau_0$: initial network parameters
    \item $\eta_\theta^{(0)}, \eta_\tau^{(0)}$: initial learning rates
    \item $T_{\mathrm{decay}}$: learning rate decay interval
    \item $\gamma \in (0,1)$: decay factor
\end{itemize}
\textbf{Initialize:} 
\begin{itemize}
    \item Neural networks $u_{\theta}$, $v_{\tau}$ with parameters $\theta \gets \theta_0$, $\tau \gets \tau_0$
    \item Learning rates $\eta_\theta \gets \eta_\theta^{(0)}$, $\eta_\tau \gets \eta_\tau^{(0)}$
\end{itemize}
\textbf{Training Loop:}
\end{flushleft}
\begin{algorithmic}[1]
\For{each epoch $t=1,2,\dots$}
    \If{$t \bmod T_{\mathrm{decay}} = 0$}, 
        \State update learning rates   $\eta_\theta \gets \gamma \cdot \eta_\theta$
        \State $\eta_\tau \gets \gamma \cdot \eta_\tau$
    \EndIf

    \State Sample interior points $\{\xi_n^{int}\}_{n=1}^{N_{int}} \subset Q$
    \State Sample boundary/initial points $\{\xi_n^{bd}\}_{n=1}^{N_{bd}} \subset \Gamma$

    \For{$l$ steps}
        \State Update the dual network gradient ascent 
        \[
        \tau \gets \tau + \eta_\tau \nabla_{\tau} \widehat{\mathcal{L}}
        \]
    \EndFor

    \For{$k$ steps}
        \State Update the primal network by gradient descent  
        \[
        \theta \gets \theta - \eta_\theta \nabla_{\theta} \widehat{\mathcal{L}}
        \]
    \EndFor
\EndFor
\end{algorithmic}

\begin{flushleft}
\textbf{Output:} Trained  parameters $\theta$, $\tau$ for networks $u_{\theta}$ and $v_{\tau}$.
\end{flushleft}
\end{algorithm}

\section{Error Analysis}

\subsection{Stability and Linearity}
In this section, we focus on linear PDEs, i.e., both  $\mathcal{F}$ and $\mathcal{B}$ are linear operators. While the inf--sup formulation from Section~2 also applies to nonlinear problems, linearity is essential for the error analysis developed here. We assume that the solution satisfies the stability estimate
\begin{align}\label{eq010a}
    \|u\|_X^2 \le C\big( \|f\|_Z^2 + \|g\|_P^2 \big),
\end{align}
where $C$ depends only on the domain $Q$ and the operators $\mathcal{F}$ and $\mathcal{B}$. 

Linearity combined with stability provides a direct  error-control mechanism. Let $u$ denote the exact solution of the PDE and $\tilde u \in X$ an arbitrary approximation (e.g., a neural network). Applying ~\eqref{eq010a} to $u - \tilde u$ then yields
\begin{align}\label{eq010b}
\|u - \tilde u\|_X^2
\;\le\;
C\Big(
\|\mathcal{F}[\xi,\tilde u] - f\|_Z^2
+
\|\mathcal{B}\tilde u - g\|_P^2
\Big).
\end{align} 
Thus, the solution error is fully controlled by the residuals of the PDE and boundary operators. This observation forms the foundational of the  subsequent analysis.

We illustrate these assumptions with two representative examples. 

{\bf Elliptic example.}  Consider the Poisson equation with Dirichlet boundary conditions
\begin{align}\label{eq011}
\begin{split}
-\Delta u &= f, \quad \text{in } \Omega, \\
u &= g, \quad \text{on } \Gamma = \partial \Omega.  
\end{split}
\end{align}
It is well known (See, e.g.,  \cite{schechter_lp_1963}) that if $f \in L^2(\Omega)$ and $g \in H^{\frac{3}{2}}(\partial \Omega)$, then there exists a unique solution $u \in H^2(\Omega)$ satisfying  
\[
\|u\|^2_{H^2(\Omega)} \leq C\left( \|f\|^2_{L^2(\Omega)} + \|g\|^2_{H^{\frac{3}{2}}(\partial \Omega)} \right),
\]
where $C$ depends only on $\Omega$.

{\bf Parabolic example.} Consider the convection--diffusion equation with nonhomogeneous Dirichlet boundary conditions
\begin{align}\label{eq012}
\begin{split}
u_t + \mathbf{b} \cdot \nabla u - \epsilon \Delta u &= f \quad \text{in } \Omega_T, \\
u &= g \quad \text{on } \Sigma := \partial\Omega \times (0,T], \\
u(x,0) &= h(x) \quad \text{on } \Omega.
\end{split}
\end{align}
As shown in \cite{lions_non-homogeneous_1972}, if $f \in L^2(\Omega_T)$, $g \in H^{\frac{3}{2}, \frac{3}{4}}(\Sigma)$, $h \in H^1(\Omega)$, $\mathbf{b}(x,t)$ is smooth, and $\epsilon>0$, then there exists a unique solution $u \in H^{2,1}(\Omega_T)$ to~\eqref{eq012}. Moreover,
\[
\|u\|^2_{H^{2,1}(\Omega_T)} \leq C\left( \|f\|^2_{L^2(\Omega_T)} + \|g\|^2_{H^{\frac{3}{2}, \frac{3}{4}}(\Sigma)} + \|h\|^2_{H^1(\Omega)} \right),
\]
where $C$ depends on $\Omega$, $T$, $\epsilon$, and $\mathbf{b}$. Here,
\begin{align*}
    H^{2,1}(\Omega_T) &:= L^2(0,T; H^2(\Omega)) \cap H^1(0,T; L^2(\Omega)),\\
H^{\frac{3}{2}, \frac{3}{4}}(\Sigma) &:= L^2(0,T; H^{\frac{3}{2}}(\partial\Omega)) \cap H^{\frac{3}{4}}(0,T; L^2(\partial\Omega)).
\end{align*}
\subsection{Error Estimation}
We take $\mathrm{dist}(\mathcal{B}u,g) = \|\mathcal{B}u - g\|_{P}$. Let $(u_n, v_n)$ be the iterate produced after $n$ steps of Algorithm~\ref{alg:inf_sup}, and let $(u,v)$ denote the solution to~\eqref{eq03}, where $u$ also sovles  ~\eqref{eq01}). We aim to bound 
$$
\| u_n - u \|_{X}, \quad \text{and}\quad  \| v_n - v \|_{Z^*} = \| v_n \|_{Z^*}.
$$
Define the residuals:
\begin{itemize}
    \item Interior residual:
    \begin{align}\label{eq013}
        \mathcal{R}_{i,n}(\xi) := \mathcal{F}[\xi, u_n] - f, \quad \text{in } Q.
    \end{align}
    \item Spatial/temporal boundary residual:
    \begin{align}\label{eq014}
        \mathcal{R}_{s,n}(\xi) := \mathcal{B}u_n - g, \quad \text{on } \Gamma.
    \end{align}    
\end{itemize}

\begin{theorem}\label{thm:3}
Let $u \in X$ be the unique solution of ~\eqref{eq01}, and let $(u,v=0)$ solve ~\eqref{eq03} with $\mathrm{dist}(\mathcal{B}u,g) = \|\mathcal{B}u - g\|_{P}$. Let $(u_n,v_n) \in \mathcal{U}_{\theta}\times\mathcal{V}_{\tau} \subset X \times Z^*$ be generated by Algorithm~\ref{alg:inf_sup}. Then
\begin{align}\label{eq015}
\| u_n - u \|^2_{X} + \| v_n - v\|^2_{Z^*} \le C(I_{MC} + I_{NN} + I_{GP}),
\end{align}
where $C$ depends only on $Q$ and $\mathcal{F}$, and 
\begin{align*}
I_{NN} &=  |\mathcal{L}(u_n,\tilde{v}) - \mathcal{L}(u_n,q_{\tilde{v}})| + |\mathcal{L}(\tilde{u},v_n) - \mathcal{L}(w_{\tilde{u}},v_n)|, \\
I_{MC} &= |\mathcal{L}(u_n,q_{\tilde{v}}) - \hat{\mathcal{L}}(u_n,q_{\tilde{v}})| + |\mathcal{L}(w_{\tilde{u}},v_n) - \hat{\mathcal{L}}(w_{\tilde{u}},v_n)|, \\
I_{GP} &= \sup_{q\in\mathcal{V_\tau}} \hat{\mathcal{L}}(u_n,q) - \inf_{w\in\mathcal{U_\theta}} \hat{\mathcal{L}}(w,v_n).
\end{align*}
Here $\tilde{v} = R_Z(F - f)\in Z^*$,  with $R_Z: Z \to Z^*$ the Riesz map satisfying $\langle R_Z z,\,w\rangle_{Z^*,Z}=(z,w)_Z$. Let $\tilde{u}$ solve 
\begin{align*}
\mathcal{F}[\xi, \tilde{u}] - f &= -R^{-1}_{Z}v_n \quad \text{in } Q, \\
\mathcal{B}\tilde{u} &= g \quad \text{on } \Gamma,
\end{align*}
where $R^{-1}_{Z}: Z^* \to Z$ is the inverse Riesz map. Finally,
\[
q_{\tilde{v}} = \arg\min_{q\in \mathcal{V_\tau}}\| q - \tilde{v}\|_{Z^*}, 
\quad 
w_{\tilde{u}} = \arg\min_{w \in \mathcal{U_\theta}}\| w - \tilde{u}\|_{X}.
\]
\end{theorem}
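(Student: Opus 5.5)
The plan is to bound separately the two "one-sided gaps" of the continuous Lagrangian at $(u_n,v_n)$, using the two test functions $\tilde v$ and $\tilde u$. Each gap is then chained through its network projection ($q_{\tilde v}$ or $w_{\tilde u}$), the empirical loss $\hat{\mathcal L}$, and the discrete optimization gap. Throughout, $\mathcal F,\mathcal B$ are linear and $v=0$.

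\textbf{Step 1 (primal error via $\tilde v$).} With $\tilde v=R_Z(\mathcal R_{i,n})$ we have
\[
\mathcal L(u_n,\tilde v)=\tfrac12\|\mathcal R_{s,n}\|_P^2+\|\mathcal R_{i,n}\|_Z^2\ge \tfrac12\big(\|\mathcal R_{i,n}\|_Z^2+\|\mathcal R_{s,n}\|_P^2\big).
\]
By the stability consequence \eqref{eq010b}, $\|u_n-u\|_X^2\le 2C\,\mathcal L(u_n,\tilde v)$. We then telescope:
\[
\mathcal L(u_n,\tilde v)\le |\mathcal L(u_n,\tilde v)-\mathcal L(u_n,q_{\tilde v})|+|\mathcal L(u_n,q_{\tilde v})-\hat{\mathcal L}(u_n,q_{\tilde v})|+\sup_{q\in\mathcal V_\tau}\hat{\mathcal L}(u_n,q),
\]
where the last term uses $q_{\tilde v}\in\mathcal V_\tau$.

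\textbf{Step 2 (dual error via $\tilde u$).} By linearity and well-posedness, $\tilde u$ exists. Its boundary term vanishes, so
\[
\mathcal L(\tilde u,v_n)=\langle -R_Z^{-1}v_n,v_n\rangle=-\|v_n\|_{Z^*}^2 .
\]
Hence $\|v_n\|_{Z^*}^2=-\mathcal L(\tilde u,v_n)$. We telescope the same way through $w_{\tilde u}\in\mathcal U_\theta$:
\[
-\mathcal L(\tilde u,v_n)\le |\mathcal L(\tilde u,v_n)-\mathcal L(w_{\tilde u},v_n)|+|\mathcal L(w_{\tilde u},v_n)-\hat{\mathcal L}(w_{\tilde u},v_n)|-\inf_{w\in\mathcal U_\theta}\hat{\mathcal L}(w,v_n).
\]

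\textbf{Step 3 (combine).} Adding the two chains gives
\[
\tfrac1{2C}\|u_n-u\|_X^2+\|v_n\|_{Z^*}^2\le I_{NN}+I_{MC}+\Big(\sup_{q}\hat{\mathcal L}(u_n,q)-\inf_w\hat{\mathcal L}(w,v_n)\Big),
\]
and the bracketed term is exactly $I_{GP}$. Taking the constant as $\max(2C,1)$ yields \eqref{eq015}.

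\textbf{Main obstacle.} The delicate point is the sign bookkeeping. The individual quantities $\sup_q\hat{\mathcal L}(u_n,q)$ and $-\inf_w\hat{\mathcal L}(w,v_n)$ need not be nonnegative. This is why the argument must add the two one-sided bounds and only then recognize $I_{GP}$, rather than bounding each separately. A second point is that $I_{GP}\ge 0$, since $\sup_q\hat{\mathcal L}(u_n,q)\ge\hat{\mathcal L}(u_n,v_n)\ge\inf_w\hat{\mathcal L}(w,v_n)$; this guarantees the right-hand side is meaningful.

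\textbf{Technical details to check.} One must confirm that the Riesz identity $\langle R_Zz,w\rangle=(z,w)_Z$ produces the positive quadratic term in Step 1 and the negative one in Step 2. One must also assume the minimizers $q_{\tilde v}$ and $w_{\tilde u}$ exist, for example because the hypothesis classes are closed; otherwise near-minimizers suffice. Finally, $C$ depends only on the stability constant of \eqref{eq010a}, and hence on $Q,\mathcal F,\mathcal B$.
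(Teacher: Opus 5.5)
Your proposal is correct and is essentially the paper's proof: it uses the same test functions $\tilde v$ and $\tilde u$, giving $\mathcal L(u_n,\tilde v)=\tfrac12\|\mathcal R_{s,n}\|_P^2+\|\mathcal R_{i,n}\|_Z^2$ and $\mathcal L(\tilde u,v_n)=-\|v_n\|_{Z^*}^2$, the same three-term telescoping through $q_{\tilde v}$, $w_{\tilde u}$ and $\hat{\mathcal L}$, and the same final addition combined with the stability estimate. Your explicit constant $\max(2C,1)$ and your point that the two one-sided bounds must be added before $I_{GP}$ appears are only slightly more careful bookkeeping than the paper gives. Note also that the argmin property of $q_{\tilde v}$ and $w_{\tilde u}$ is never used, so membership in $\mathcal V_\tau$ and $\mathcal U_\theta$ suffices.
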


\begin{proof}We  estimate the error using an energy argument.  Let $e := u_n - u$. From the  residual definitions ~\eqref{eq013}, \eqref{eq014} and the PDE~\eqref{eq01}, $e$ satisfies 
\begin{align}\label{eq016}
\begin{split}
\mathcal{F}[\xi, e] &= \mathcal{R}_{i} \quad \text{in } Q, \\
\mathcal{B}e &= \mathcal{R}_{s} \quad \text{on } \Gamma,
\end{split}
\end{align}
where $\mathcal{R}_{i}: = \mathcal{R}_{i,n}$ and $\mathcal{R}_{s}: = \mathcal{R}_{s,n}$. By well-posedness, 
\begin{align}\label{eq017}
\|e\|_X \leq C\left(\|\mathcal{R}_{s}\|_P + \|\mathcal{R}_{i}\|_Z\right).
\end{align}
We next express residual norms in variational form. 
Let $R_Z:Z\to Z^*$ be the Riesz isomorphism so that $\langle R_Z z,\,w\rangle_{Z^*,Z}=(z,w)_Z$. For $u_n \in \mathcal{U}_{\theta}$, define 
$\tilde{v} = R_Z(F - f)\in Z^*$. Then
\begin{align}\label{eq018}
\mathcal{L}(u_n,\tilde{v}) = \tfrac{1}{2}\| \mathcal{B}u_n - g\|^2_{P} + \|F[\xi, u_n] - f\|_Z^2 = \tfrac{1}{2}\|\mathcal{R}_{s}\|^2_P + \|\mathcal{R}_{i}\|^2_Z.
\end{align}
Choosing $q_{\tilde{v}}$ as the best approximation of $\tilde v$ in $\mathcal V_\tau$, i.e.,
\[
q_{\tilde{v}} = \arg\min_{q\in \mathcal{V_\tau}}\| q - \tilde{v}\|_{Z^*},
\]
we have
\begin{align}\label{eq019}
\begin{split}
    \mathcal{L}(u_n,\tilde{v}) 
    &\le \big|\mathcal{L}(u_n,\tilde{v}) - \mathcal{L}(u_n,q_{\tilde{v}})\big|
       + \big|\mathcal{L}(u_n,q_{\tilde{v}}) - \hat{\mathcal{L}}(u_n,q_{\tilde{v}})\big| 
       + \sup_{q\in\mathcal{V_\tau}}\hat{\mathcal{L}}(u_n,q).
\end{split}
\end{align}
Combining~\eqref{eq018} and~\eqref{eq019} yields
\begin{align}\label{eq020}
\tfrac{1}{2}\|\mathcal{R}_{s}\|^2_P + \|\mathcal{R}_{i}\|^2_Z
\le \big|\mathcal{L}(u_n,\tilde{v}) - \mathcal{L}(u_n,q_{\tilde{v}})\big|
   + \big|\mathcal{L}(u_n,q_{\tilde{v}}) - \hat{\mathcal{L}}(u_n,q_{\tilde{v}})\big|
   + \sup_{q\in\mathcal{V_\tau}}\hat{\mathcal{L}}(u_n,q).
\end{align}
For the dual error, well-posedness of~\eqref{eq01}
implies that for each $v_n \in \mathcal{V}_{\tau}$, there exists a unique $\tilde{u} \in X$ solving 
\begin{align*}
\mathcal{F}[\xi, \tilde{u}] - f &= -R^{-1}_{Z}v_n \quad \text{in } Q, \\
\mathcal{B}\tilde{u} &= g \quad \text{on } \Gamma,
\end{align*}
where $R^{-1}_{Z}: Z^* \to Z$ denotes the Riesz inverse.  Then 
\[
\mathcal{L}(\tilde{u},v_n) = -\| v_n\|^2_{Z^*}.
\]
Let $w_{\tilde{u}} = \arg\min_{w \in \mathcal{U_\theta}}\| w - \tilde{u}\|_{X}$. Since $v = 0$, 
\begin{align}\label{eq021}
\| v_n - v\|^2_{Z^*}
\le \big|\mathcal{L}(\tilde{u},v_n) - \mathcal{L}(w_{\tilde{u}},v_n)\big|
   + \big|\mathcal{L}(w_{\tilde{u}},v_n) - \hat{\mathcal{L}}(w_{\tilde{u}},v_n)\big|
   - \inf_{w\in\mathcal{U_\theta}} \hat{\mathcal{L}}(w,v_n).
\end{align}
Combining~\eqref{eq020} and~\eqref{eq021} yields 
\begin{align}\label{eq022}
\tfrac{1}{2}\|\mathcal{R}_s\|^2_{P}
+ \|\mathcal{R}_i\|^2_{Z} 
+ \| v_n - v\|^2_{Z^*}
\le I_{MC} + I_{NN} + I_{GP}.
\end{align}
Together with ~\eqref{eq017}, this proves  Theorem~\ref{thm:3}.
\end{proof}
\textbf{Remark.}
Theorem~\ref{thm:3} shows that the total approximation error of the inf--sup neural network method naturally decomposes into three components: the neural network approximation error $I_{\mathrm{NN}}$, which reflects the expressive power of the hypothesis spaces $\mathcal{U}_\theta$ and $\mathcal{V}_\tau$; the sampling error $I_{\mathrm{MC}}$, arising from the discretization of the continuous loss function via finite interior and boundary sampling; and the optimization gap $I_{\mathrm{GP}}$, which captures the effect of finite-time training of the saddle-point problem. This decomposition clarifies how approximation, numerical integration, and optimization jointly determine the overall accuracy of the method.
\subsection{Inf–Sup Learning under Reduced Regularity} 
In this section, we discuss the practical implementation of the inf–sup method, focusing on the parabolic problem ~\eqref{eq012}, including network construction and loss discretization.

Problem~\eqref{eq012} can be reformulated as the saddle-point problem
\begin{align}\label{eq023}
\inf_{u \in H^{2,1}(\Omega_T)} \sup_{v \in L^2(0,T;L^2(\Omega))} \mathcal{L}(u,v),
\end{align}
where 
\begin{align*}
    \mathcal{L}(u, v) 
    &= \tfrac{1}{2} \|u(x, 0) - h(x)\|_{H^1(\Omega)}^2 
     + \tfrac{1}{2} \|u - g\|_{H^{\frac{3}{2}, \frac{3}{4}}(\Sigma)}^2 \\
    &\quad + \int_{\Omega_T} \big( u_t + \mathbf{b} \cdot \nabla u - \epsilon \Delta u - f \big) v \, dx \, dt.
\end{align*}
A fully discrete realization of this loss is challenging, since the boundary norms involve gradients, whose accurate evaluation requires dense sampling and numerical differentiation, potentially causing instability—especially in high dimensions.
To obtain a more practical scheme, we seek a loss depending only on quantities computable from sampled function values. If the solution admits a stability estimate in lower-regularity spaces, the high-order norms can be replaced by weaker ones without losing theoretical consistency.
Assume therefore that the solution  $u \in X$ of ~\eqref{eq01} satisfies 
\begin{align}\label{eq025aaaa}
    \|u\|^2_{\mathcal{X}} \leq C\big( \|f\|^2_{\mathcal{Z}} + \|g\|^2_{\mathcal{P}} \big),
\end{align}
where $ X \subset \mathcal{X}$, $Z \subset \mathcal{Z}$, and $P \subset \mathcal{P}$ are spaces of lower (or equal) regularity.  Choosing $\mathrm{dist}(\mathcal{B}u,g) = \|\mathcal{B}u - g\|_{\mathcal{P}}$,  we obtain the following result
\begin{theorem}\label{thm:4}
Let $u \in X$ be the unique solution of ~\eqref{eq01} satisfying ~\eqref{eq025aaaa}, and let $(u,v=0)$ solve ~\eqref{eq03} with $\mathrm{dist}(\mathcal{B}u,g) = \|\mathcal{B}u - g\|_{\mathcal{P}}$. Let $(u_n,v_n) \in \mathcal{U}_{\theta}\times\mathcal{V}_{\tau} \subset X \times Z^*$ be generated by Algorithm~\ref{alg:inf_sup}. Then
\begin{align}\label{eq025aaa}
\| u_n - u \|^2_{\mathcal{X}} + \| v_n - v\|^2_{Z^*} \le C(I_{MC} + I_{NN} + I_{GP}),
\end{align}
where the quantities $I_{NN}$, $I_{MC}$, and $I_{GP}$ are defined as in Theorem~\ref{thm:3}.
\end{theorem}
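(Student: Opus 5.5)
The plan is to repeat the proof of Theorem~\ref{thm:3} essentially line by line, with the high-order norm $\|\cdot\|_X$ replaced by $\|\cdot\|_{\mathcal{X}}$ on the primal error and the boundary norm $\|\cdot\|_P$ replaced by $\|\cdot\|_{\mathcal{P}}$ in the loss. The interior pairing and the dual space $Z^*$ stay the same, so the dual-error part of the argument carries over without change.

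\textbf{Primal error.} Set $e := u_n - u$. By linearity of $\mathcal{F}$ and $\mathcal{B}$, $e$ solves \eqref{eq016} with data $(\mathcal{R}_i,\mathcal{R}_s)$. Since $u_n \in \mathcal{U}_\theta\subset X$, we have $\mathcal{R}_i\in Z\subset\mathcal{Z}$ and $\mathcal{R}_s\in P\subset\mathcal{P}$. Applying \eqref{eq025aaaa} to $e$ gives
\begin{align*}
\|e\|_{\mathcal{X}}^2 \le C\big(\|\mathcal{R}_i\|_{\mathcal{Z}}^2+\|\mathcal{R}_s\|_{\mathcal{P}}^2\big)\le C\big(\|\mathcal{R}_i\|_{Z}^2+\|\mathcal{R}_s\|_{\mathcal{P}}^2\big).
\end{align*}
The second inequality uses the continuous embedding $Z\hookrightarrow\mathcal{Z}$, and the embedding constant is absorbed into $C$.

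\textbf{Residual bound.} Take $\tilde v = R_Z(\mathcal{F}[\xi,u_n]-f)$ as before. With the new distance,
\[
\mathcal{L}(u_n,\tilde v)=\tfrac12\|\mathcal{R}_s\|_{\mathcal{P}}^2+\|\mathcal{R}_i\|_Z^2 .
\]
Inserting $q_{\tilde v}$ and $\hat{\mathcal{L}}$ by the triangle inequality, and using $\hat{\mathcal{L}}(u_n,q_{\tilde v})\le \sup_{q\in\mathcal{V}_\tau}\hat{\mathcal{L}}(u_n,q)$, gives the analogue of \eqref{eq020} verbatim.

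\textbf{Dual error.} Let $\tilde u$ be the unique solution with interior data $f-R_Z^{-1}v_n$ and boundary data $g$. Then $\mathcal{B}\tilde u=g$, so the distance term vanishes in whatever norm is used, and
\[
\mathcal{L}(\tilde u,v_n)=\langle -R_Z^{-1}v_n,v_n\rangle=-\|v_n\|_{Z^*}^2 .
\]
The same insertion of $w_{\tilde u}$ and $\hat{\mathcal{L}}$ yields \eqref{eq021}. Adding the residual bound and the dual bound gives
\[
\tfrac12\|\mathcal{R}_s\|_{\mathcal{P}}^2+\|\mathcal{R}_i\|_Z^2+\|v_n\|_{Z^*}^2\le I_{MC}+I_{NN}+I_{GP},
\]
and combining this with the primal error estimate proves \eqref{eq025aaa}.

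\textbf{Main obstacle.} The one point needing care is the existence and uniqueness of $\tilde u$, and the claim that it lies in $X$ so that $w_{\tilde u}$ makes sense. This follows from the original well-posedness of \eqref{eq01} in $X$, not from \eqref{eq025aaaa}. It requires $R_Z^{-1}v_n\in Z$, which holds because $v_n\in Z^*$. I would also check that the pairing term is finite for $u_n\in X$, which is immediate. Beyond these checks the argument is a direct substitution into the proof of Theorem~\ref{thm:3}. The constant $C$ now also depends on the embedding constants of $Z\subset\mathcal{Z}$ and $P\subset\mathcal{P}$.
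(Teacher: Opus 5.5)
Your proposal is correct and follows the paper's proof: the paper likewise reruns the argument of Theorem~\ref{thm:3} with the weaker stability estimate \eqref{eq025aaaa} in place of \eqref{eq017}. Your explicit use of the embedding $Z\hookrightarrow\mathcal{Z}$, which converts the $\mathcal{Z}$-norm required by \eqref{eq025aaaa} into the $Z$-norm of $\mathcal{R}_i$ that the loss actually controls, is a detail the paper leaves implicit; note that no embedding constant for $P\subset\mathcal{P}$ is needed, since the loss already measures $\mathcal{R}_s$ in $\mathcal{P}$.
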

\begin{proof}
The argument follows the same steps as in Theorem~\ref{thm:3},
using the weaker stability estimate ~\eqref{eq025aaaa}. 
\end{proof}
Theorem~\ref{thm:4} shows that if a stability estimate for  $u$ holds in a lower-regularity space, it can be utilized to construct a modified loss with a more tractable discretization. By weakening the regularity requirements, higher-order derivatives need not be evaluated, leading to a simple and more stable numerical implementation. To illustrate this approach, we revisit the parabolic problem~\eqref{eq012} and establish the following lemma, which provide an a priori estimate for $u$ in a weaker norm.
\begin{lemma} \label{lem:LH1-estimate}
Consider the PDE problem~\eqref{eq012}.  
If $f \in L^2(\Omega_T)$, $g \in H^{\frac{3}{2}, \frac{3}{4}}(\Sigma)$, $h \in H^1(\Omega)$, $\mathbf{b}(x,t)$ is smooth, and $\epsilon > 0$,  
then there exists a unique solution $u \in H^{2,1}(\Omega_T)$ to~\eqref{eq012}.  
Moreover, $u$ satisfies the estimate
\begin{align}\label{eq025aa}
    \|u\|_{L^2(\Omega_T)} \leq C\left( \|f\|^2_{L^2(\Omega_T)} + \|g\|^2_{L^2(0,T; H^{\frac{1}{2}}(\partial\Omega))} + \|h\|^2_{L^2(\Omega)} \right),
\end{align}
where the constant $C$ depends on $\Omega$, $T$, $\epsilon$, and $\mathbf{b}$.
\end{lemma}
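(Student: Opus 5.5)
The plan is to take existence, uniqueness and the regularity $u\in H^{2,1}(\Omega_T)$ directly from the parabolic result of \cite{lions_non-homogeneous_1972} quoted in the parabolic example above. The weak-norm estimate \eqref{eq025aa} would then be proved by a duality (transposition) argument. I read \eqref{eq025aa} as a bound on $\|u\|^2_{L^2(\Omega_T)}$, matching the squares on the right-hand side; the argument actually gives the unsquared form $\|u\|\le C(\|f\|+\|g\|+\|h\|)$, which implies the squared one.

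A direct energy estimate does not work. Testing with $u$ leaves a boundary flux term $\epsilon\int_\Sigma g\,\partial_n u$ that is not controlled. Lifting $g$ by a harmonic extension $G(t)$ with $\|G(t)\|_{H^1(\Omega)}\le C\|g(t)\|_{H^{1/2}(\partial\Omega)}$ fails as well, because it introduces $G_t$, which the $L^2(0,T;H^{1/2}(\partial\Omega))$ norm does not control.

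\textbf{Step 1 (adjoint problem).} Fix any $\psi\in L^2(\Omega_T)$ and let $\varphi$ solve the backward adjoint problem
\begin{align*}
-\varphi_t-\nabla\cdot(\mathbf{b}\varphi)-\epsilon\Delta\varphi&=\psi \quad\text{in }\Omega_T,\\
\varphi&=0\quad\text{on }\Sigma,\\
\varphi(\cdot,T)&=0\quad\text{in }\Omega .
\end{align*}
Reverse time via $s=T-t$; since $\mathbf{b}$ is smooth, $\nabla\cdot(\mathbf{b}\varphi)=\mathbf{b}\cdot\nabla\varphi+(\nabla\cdot\mathbf{b})\varphi$ is a lower-order perturbation. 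This is then a forward problem of the same type as \eqref{eq012}, with zero boundary and initial data. The same Lions--Magenes theory therefore gives $\varphi\in H^{2,1}(\Omega_T)$ with $\|\varphi\|_{H^{2,1}(\Omega_T)}\le C\|\psi\|_{L^2(\Omega_T)}$. By trace results this yields two further bounds:
\begin{itemize}
\item $\|\partial_n\varphi\|_{L^2(0,T;H^{1/2}(\partial\Omega))}\le C\|\psi\|$, hence also $\|\partial_n\varphi\|_{L^2(0,T;H^{-1/2}(\partial\Omega))}\le C\|\psi\|$;
\item $\|\varphi(\cdot,0)\|_{H^1(\Omega)}\le C\|\psi\|$, using $H^{2,1}\hookrightarrow C([0,T];H^1(\Omega))$.
\end{itemize}

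\textbf{Step 2 (Green's identity).} Multiply the equation in \eqref{eq012} by $\varphi$ and integrate over $\Omega_T$. Integrate by parts in $t$ (using $\varphi(T)=0$ and $u(0)=h$) and in $x$ (using $\varphi|_\Sigma=0$ and $u|_\Sigma=g$). All terms are legitimate because both $u$ and $\varphi$ lie in $H^{2,1}(\Omega_T)$. This gives
\[
\int_{\Omega_T}u\,\psi\,dx\,dt=\int_{\Omega_T}f\varphi\,dx\,dt+\int_\Omega h\,\varphi(\cdot,0)\,dx-\epsilon\int_\Sigma g\,\partial_n\varphi\,dS\,dt .
\]
No boundary convection term appears, since $\varphi=0$ on $\Sigma$.

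\textbf{Step 3 (duality bound).} Apply Cauchy--Schwarz to the interior and initial terms. Bound the boundary term by the $H^{1/2}$--$H^{-1/2}$ duality on $\partial\Omega$, pointwise in $t$, and then integrate in time. Together with Step 1 this gives
\[
\Big|\int_{\Omega_T}u\psi\Big|\le C\big(\|f\|_{L^2(\Omega_T)}+\|g\|_{L^2(0,T;H^{1/2}(\partial\Omega))}+\|h\|_{L^2(\Omega)}\big)\|\psi\|_{L^2(\Omega_T)} .
\]
Taking the supremum over $\|\psi\|_{L^2(\Omega_T)}\le1$ and squaring yields \eqref{eq025aa}. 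The constant depends only on $\Omega,T,\epsilon,\mathbf{b}$ through the adjoint regularity constant.

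\textbf{Main obstacle.} The delicate point is Step 1. One must confirm that the cited maximal-regularity result applies to the time-reversed adjoint operator, whose convection term is in divergence form, and must identify the normal-derivative trace of $H^{2,1}$ functions precisely enough to pair it with $g$ in $L^2(0,T;H^{1/2}(\partial\Omega))$. I would first rewrite the adjoint operator in nondivergence form with a zeroth-order coefficient, and invoke the standard trace theorem $L^2(0,T;H^2(\Omega))\to L^2(0,T;H^{1/2}(\partial\Omega))$ for $\partial_n$.
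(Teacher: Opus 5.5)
Your proposal is correct and follows the paper's proof essentially step for step. You use the same backward adjoint problem with $H^{2,1}$ regularity and trace bounds for $\varphi(\cdot,0)$ and $\partial_n\varphi$, the same Green's identity $\int_{\Omega_T}u\psi=\int_{\Omega_T}f\varphi+\int_\Omega h\,\varphi(\cdot,0)-\epsilon\int_\Sigma g\,\partial_n\varphi$, and the same $H^{1/2}$--$H^{-1/2}$ duality bound followed by a supremum over $\psi$. Like the paper, you actually obtain the unsquared estimate $\|u\|_{L^2(\Omega_T)}\le C(\|f\|+\|g\|+\|h\|)$, and your decision to read the stated inequality with the left side squared is the right fix for a typo in the statement.
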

The proof follows from a standard energy argument and is deferred to the appendix. 

By Theorem~\ref{thm:4} and estimate~\eqref{eq025aa}, the modified saddle point problem reduces to 
\begin{align}\label{eq025b}
\inf_{u \in H^{2,1}(\Omega_T)} \; \sup_{v \in L^2(0,T;L^2(\Omega))} \; 
\mathcal{L}(u,v),
\end{align}
with loss function functional 
\begin{align*}
\mathcal{L}(u,v)
&= \tfrac{1}{2}\|u(x,0)-h(x)\|_{L^2(\Omega)}^2
 + \tfrac{1}{2}\|u-g\|_{L^2(0,T; H^{\frac{1}{2}}(\partial\Omega))}^2 \\
&\quad
 + \int_{\Omega_T}\big(u_t+\mathbf{b}\cdot\nabla u-\epsilon\Delta u-f\big)v\,dx\,dt.
\end{align*}
For numerical implementation, the terms are approximated using a combination of Monte Carlo sampling and tensor–product quadrature based on sampled space–time points.

Specifically, we draw samples as follows: 
\begin{itemize}
\item $m$ space--time samples $\{(x_i,t_i)\}_{i=1}^{m}\subset\Omega_T$ drawn uniformly to approximate integrals over $\Omega_T$,
\item $\hat m$ spatial samples $\{\hat x_i\}_{i=1}^{\hat m}\subset\partial\Omega$ and $\hat n$ temporal samples 
      $\{\hat t_j\}_{j=1}^{\hat n}\subset(0,T]$. 
      The tensor-product set 
      $\{(\hat x_i,\hat t_j)\}_{i\in [\hat m]}^{j\in[\hat n]}
      \subset \Sigma$ is used to approximate the boundary integral appearing in the $L^2(0,T;H^{\frac12}(\partial\Omega))$ term,
\item $m_0$ spatial samples $\{x_i^0\}_{i=1}^{m_0}\subset\Omega$ drawn uniformly to approximate the initial condition term.
\end{itemize}
The space--time integral
\[
\int_{\Omega_T} (\mathcal F[u] - f)\, v \, dx\,dt
\]
and the initial condition term
\[
\|u(x,0)-h(x)\|_{L^2(\Omega)}^2
\]
are approximated using the Monte Carlo estimator
\[
\frac{|\Omega|T}{m}
\sum_{i=1}^{m}
(\mathcal F[u](x_i,t_i)-f(x_i,t_i))\,v(x_i,t_i),
\]
and
\[
\frac{|\Omega|}{m_0}
\sum_{i=1}^{m_0}
|u(x_i^0,0)-h(x_i^0)|^2 .
\]
The boundary term 
\(
\|u-g\|_{L^2(0,T;H^{\frac{1}{2}}(\partial\Omega))}^2
\)
is discretized by 
\begin{itemize}
\item approximating the $L^2$-in-space contributions  via pointwise averages,
\item approximating the fractional $H^{\frac{1}{2}}(\partial\Omega)$ semi-norm using pairwise difference quotients
\[
\frac{|w(\hat x_i)-w(\hat x_k)|^2}{|\hat x_i-\hat x_k|^{d}},
\]
which provides a discrete approximation of the nonlocal boundary norm.
\end{itemize}
Combining the quadrature approximations,  the fully discrete loss functional is 
\begin{align}\label{eq025c}
\begin{split}
\widehat{\mathcal{L}}(u,v)
:=\;&
\frac{|\Omega|}{2m_0}
\sum_{i=1}^{m_0}
|u(x_i^0,0)-h(x_i^0)|^2
\\
&+
\frac{|\partial\Omega|T}{2\hat n\hat m}
\sum_{j=1}^{\hat n}\sum_{i=1}^{\hat m}
|u(\hat x_i,\hat t_j)-g(\hat x_i,\hat t_j)|^2
\\
&+
\frac{|\partial\Omega|^2T}{2\hat n\hat m^2}
\sum_{j=1}^{\hat n}\sum_{i\ne k}^{\hat m}
\frac{|u(\hat x_i,\hat t_j)-g(\hat x_i,\hat t_j)
-(u(\hat x_k,\hat t_j)-g(\hat x_k,\hat t_j))|^2}
{|\hat x_i-\hat x_k|^{d}}
\\
&+
\frac{|\Omega|T}{m}
\sum_{i=1}^{m}
(\mathcal F[u](x_i,t_i)-f(x_i,t_i))\,v(x_i,t_i).
\end{split}
\end{align}
where
\[
\mathcal F[u] = u_t + \mathbf b \cdot \nabla u - \epsilon \Delta u
\]
and $d$ is the spatial dimension.

Algorithm~\ref{alg:inf_sup} can now be applied to train the network. Let $(u,v=0)$ be the solution of the inf--sup problem~\eqref{eq025b}, and let  $(u_n,v_n)$ be the approximation generated by Algorithm~\ref{alg:inf_sup} using  $\widehat{\mathcal{L}}(u, v)$. Then the error satisfies 
\begin{align}\label{eq025d}
\| u_n - u \|^2_{L^2(\Omega_T)} + \| v_n - v\|^2_{L^2(\Omega_T)} 
\le C \big( I_{MC} + I_{NN} + I_{GP} \big),
\end{align}
where  $I_{NN}$, $I_{MC}$, and $I_{GP}$  are defined as in Theorem~\ref{thm:3},  and $C$ depends on $\Omega$, $T$, and $\mathbf{b}$.

\section{Error Bounds}
Having identified the three main sources of error --  neural network approximation error $I_{\text{NN}}$, the Monte Carlo integration error $I_{\text{MC}}$, and the optimization gap $I_{\text{GP}}$. We now aim to establish theoretical bounds for each. 
While \cite{huo_inf-sup_2024} analyzed these components for the elliptic problem~\eqref{eq011}, here we focus on the parabolic problem~\eqref{eq012},  generating approximations $(u_n, v_n)$ via Algorithm~\ref{alg:inf_sup} with the loss $\mathcal{L}(u,v)$ in~\eqref{eq025b} 
and its fully discrete counterpart $\widehat{\mathcal{L}}_s$ in~\eqref{eq025c}.  
Our goal is to quantify the contribution of each component to the total approximation error. 
 
From the previous results, we know that
\[I_{NN} =  |\mathcal{L}(u_n,\tilde{v}) - \mathcal{L}(u_n,q_{\tilde{v}})| + |\mathcal{L}(\tilde{u},v_n) - \mathcal{L}(w_{\tilde{u}},v_n)|.\]
Recall that in the present parabolic setting,
the operator $\mathcal F$ 
is given by 
\[
\mathcal F[u] = u_t + \mathbf b\cdot\nabla u - \epsilon \Delta u.
\]
Hence
\[
\tilde v = \mathcal F[u_n]-f.
\]
The function $\tilde u$ is defined as the solution of
\begin{align*}
\mathcal F[\tilde u] - f &= -v_n, \quad \text{in } \Omega_T,\\
\tilde u &= g, \quad \text{on } \partial\Omega \times (0,T],\\
\tilde u(x,0) &= h(x), \quad \text{in } \Omega .
\end{align*}
Let 
$$
e_v = q_{\tilde{v}} - \tilde{v}, \quad e_u = w_{\tilde{u}} - \tilde{u},
$$
which represent the neural network approximation errors for the dual and
primal variables, respectively. 
Here $q_{\tilde{v}} \in \mathcal{V}_\tau$ denotes  the best approximation of 
$\tilde{v}$ in the neural-network space $\mathcal{V}_\tau$, and 
$w_{\tilde{u}} \in \mathcal{U}_\theta$ denotes the best approximation of 
$\tilde{u}$ in $\mathcal{U}_\theta$. Then, for the first term in $I_{NN}$, we obtain
\begin{align}
\begin{aligned}
|\mathcal L(u_n,\tilde v)-\mathcal L(u_n,q_{\tilde v})|
&= \left|
\int_{\Omega_T}
(\mathcal F[u_n]-f)\,(\tilde v-q_{\tilde v})
\,dx\,dt
\right|\\
&\le\|\tilde v\|_{L^2(\Omega_T)}
\|\tilde v-q_{\tilde v}\|_{L^2(\Omega_T)}\\
&= \|\tilde v\|_{L^2(\Omega_T)}
\|e_v\|_{L^2(\Omega_T)} .
\end{aligned}
\label{eq025}
\end{align}
For the second term, we compute
\begin{align*}
&\left|\mathcal{L}(w_{\tilde{u}}, v_n) - \mathcal{L}(\tilde{u}, v_n)\right| \\
&= \Bigg| 
\frac{1}{2} \left(
\|w_{\tilde{u}} - g\|^2_{L^2(0,T; H^{\frac{1}{2}}(\partial\Omega))} 
+ \|w_{\tilde{u}}(x,0) - h\|^2_{L^2(\Omega)}
\right) 
+ \int_{\Omega_T}
(\mathcal F[w_{\tilde u}]-\mathcal F[\tilde u])\,v_n
\,dx\,dt
\Bigg| \\
& =\Bigg|  
\frac{1}{2} \left(\|e_u\|^2_{L^2(0,T; H^{\frac{1}{2}}(\partial\Omega))} 
+ \|e_u(\cdot, 0)\|^2_{L^2(\Omega)}\right) +   \int_{\Omega_T} 
 \mathcal F[e_u]
\cdot v_n \, dx\,dt\Bigg|. 
\end{align*}
Applying the trace theorem and Cauchy-Schwartz inequality gives 
\begin{align*} 
\left|\mathcal{L}(w_{\tilde{u}}, v_n) - \mathcal{L}(\tilde{u}, v_n)\right|\le 
C_{tr} \|e_u\|^2_{H^{2,1}(\Omega_T)} 
+ \left\|\mathcal F[e_u] \right\|_{L^2(\Omega_T)} 
\cdot \|v_n\|_{L^2(\Omega_T)},
\end{align*} 
where $C_{tr}>0$ is a constant that depends only on the domain $\Omega$. Next, using the definition of $\mathcal F$,
$$
\left\|\mathcal F[e_u] \right\|_{L^2(\Omega_T)} \le \left( 1 + \|\mathbf{b}\|_{L^\infty(\Omega_T)} + \epsilon \right)  
\|e_u\|_{H^{2,1}(\Omega_T)}.  
$$
Therefore 
\begin{align}\label{eq026}
    \left|\mathcal{L}(w_{\tilde{u}}, v_n) - \mathcal{L}(\tilde{u}, v_n)\right| 
 \leq 
 C \left(\|e_u\|_{H^{2,1}(\Omega_T)} + \|v_n\|_{L^2(\Omega_T)} 
\right) 
\|e_u\|_{H^{2,1}(\Omega_T)}, 
\end{align}
where $C$ depends on the trace constant and $\|\mathbf{b}\|_{L^\infty(\Omega_T)}$. 
Combining (\ref{eq025}) and (\ref{eq026}) yields,
\begin{align}\label{eq21}
I_{NN} \le C_v\| e_v\|_{L^2(\Omega_T)} + C_u\|e_u\|_{H^{2,1}(\Omega_T)}, 
\end{align}
where $C_v = \| \tilde{v} \|_{L^2(\Omega_T)}$ and $C_u = C
\left(\|e_u\|_{H^{2,1}(\Omega_T)} + \|v_n\|_{L^2(\Omega_T)}\right).$
Thus, $I_{NN}$ is small whenever the network classes $\mathcal{U}_\theta$ and $\mathcal{V}_\tau$ accurately approximates  $\tilde{u}$ and $\tilde{v}$. 
Recent approximation results in Sobolev-type spaces (e.g., \cite{abdeljawad_approximations_2022}) further support this claim.  

Next, consider the Monte Carlo sampling error
\[
I_{\mathrm{MC}} = \left|\mathcal{L}(u_n, q_{\tilde{v}}) - \hat{\mathcal{L}_s}(u_n, q_{\tilde{v}})\right| 
+ \left|\mathcal{L}(w_{\tilde{u}}, v_n) - \hat{\mathcal{L}_s}(w_{\tilde{u}}, v_n)\right|.
\]
It suffices to establish a bound for the generic sampling error
$\big|\mathcal{L}(u,v)-\widehat{\mathcal{L}}_s(u,v)\big|.$ Once such an estimate is obtained, the same bound applies directly to both components in the definition of $I_{\mathrm{MC}}$.

Based on the definitions of $\mathcal{L}$ and $\widehat{\mathcal{L}}_s$ in \eqref{eq025b}-\eqref{eq025c}, the sampling error can be decomposed as
\[
\big|\mathcal{L}(u,v)-\widehat{\mathcal{L}}_s(u,v)\big|
\le D_1(u) + D_2(u) + D_3(u,v),
\] 
where each term corresponds to one component of the loss:
\begin{align*}
D_1(u) &= 
\Big|
\tfrac{1}{2}\|u(x,0)-h\|_{L^2(\Omega)}^2
-
\frac{|\Omega|}{2m_0}
\sum_{i=1}^{m_0}
|u(x_i^0,0)-h(x_i^0)|^2
\Big|,
\\[0.6ex]
D_2(u) &= 
\Big|
\tfrac{1}{2}\|u-g\|_{L^2(0,T;H^{\frac{1}{2}}(\partial\Omega))}^2
-
\text{(the discrete boundary approximation in \eqref{eq025c})}
\Big|,
\\[0.6ex]
D_3(u,v) &= 
\Big|
\int_{\Omega_T}(\mathcal F[u]-f)v\,dx\,dt
-
\frac{|\Omega|T}{m}
\sum_{i=1}^{m}
\big(\mathcal F[u](x_i,t_i)-f(x_i,t_i)\big)
v(x_i,t_i)
\Big|.
\end{align*}
Hence, it suffices to estimate $D_1$, $D_2$, and $D_3$ separately.

By standard Monte Carlo theory (see \cite{caflisch_monte_1998}), if the spatial
samples $\{x_i^0\}_{i=1}^{m_0}$ are drawn i.i.d. uniformly 
from $\Omega$, then for sufficiently large $m_0$,
\[
D_1(u)
\;\le\; |\Omega|\,\sigma_1\,m_0^{-\frac12},
\]
almost surely, where 
\[
\sigma_1^2
= \int_{\Omega}
\left(
(u(x,0)-h(x))^2
- \frac{1}{|\Omega|}\int_{\Omega}(u(x',0)-h(x'))^2\,dx'
\right)^2 dx.
\]
Similarly, if the space--time samples are drawn i.i.d. uniformly from  $\Omega_T$, then for sufficiently large $m$, 
\[
D_3(u,v)
\;\le\; |\Omega_T|\,\sigma_2\,m^{-\frac12},
\]
almost surely, where
\[
\sigma_2^2
= \int_{\Omega_T}
\left(
(\mathcal F[u] - f)v
- \frac{1}{|\Omega_T|}
   \int_{\Omega_T}(\mathcal F[u] - f)v\,dx'\,dt'
\right)^2 dx\,dt.
\]
We first estimate $\sigma_1^2$. Define
\[
w(x) := (u(x,0)-h(x))^2,
\qquad
\bar w := \frac{1}{|\Omega|}\int_{\Omega} w(x')\,dx'.
\]
Then
\[
\sigma_1^2
=
\int_{\Omega} (w-\bar w)^2\,dx.
\]
Using the identity
\[
\int_{\Omega} (w-\bar w)^2\,dx
=
\int_{\Omega} w^2\,dx
-
|\Omega|\,\bar w^2,
\]
we obtain the upper bound
\[
\sigma_1^2
\le
\int_{\Omega} w^2\,dx
=
\|u(\cdot,0)-h\|_{L^4(\Omega)}^4.
\]
Therefore,
\[
D_1(u)
\le
|\Omega|\,\sigma_1\,m_0^{-\frac12}
\le
|\Omega|\,\|u(\cdot,0)-h\|_{L^4(\Omega)}^2\,m_0^{-\frac12}.
\]
Next, we estimate $\sigma_2^2$. Similar to the estimate for $\sigma_1^2$, using the identity over $\Omega_T$ and dropping the non-positive mean squared term yields the upper bound
\[
\sigma_2^2
\le
\int_{\Omega_T}\big((\mathcal F[u]-f)v\big)^2\,dx\,dt
=
\|(\mathcal F[u]-f)v\|_{L^2(\Omega_T)}^2.
\]
Therefore,
\[
D_3(u,v)
\le
|\Omega_T|\,\sigma_2\,m^{-\frac12}
\le
|\Omega_T|\,\|(\mathcal F[u]-f)v\|_{L^2(\Omega_T)}\,m^{-\frac12}.
\]
Using H\"older's inequality, we further obtain
\[
D_3(u,v)
\le
|\Omega_T|\,\|v\|_{L^\infty(\Omega_T)}
\|\mathcal F[u]-f\|_{L^2(\Omega_T)}\,m^{-\frac12}.
\]
The term $D_2$ involves  fractional Sobolev norms on $\partial\Omega_T$. To estimate it, we introduce a tensor--product grid $G^{r,r'}_{k,k'} \subset \overline{\Omega}_T$, with mesh sizes
\[
h = 2^{-k}(r-1)^{-1}, 
\qquad
h' = 2^{-k'}(r'-1)^{-1},
\]
We define $\bar G^{r,r'}_{k,k'} := G^{r,r'}_{k,k'} \cap \Sigma = \{(\hat x_i,\hat t_j)\}_{i\in [\hat m]}^{j\in[\hat n]}$, the lateral boundary grid of size
\[
\hat{m}\,\hat{n}
= 2d \,(r2^k)^{\,d-1} (r'2^{k'}).
\]
Assume $u-g \in \operatorname{Tr}\!\left(H^p(0,T;H^s(\Omega))\right)$ 
with $s > \frac{d}{2}$, $p > \frac12$, and choose 
\[
r > \max\{s,1\}, 
\qquad 
r' > \max\{p,1\}.
\]
Standard approximation estimates for fractional Sobolev norms (see  \cite{bonito_convergence_2025}) then yield
\[
D_2(u) \le C_d\,\hat{m}^{-\alpha}\,\hat{n}^{-\beta},
\]
where $C_d>0$ depends only on  $\|u-g\|_{\operatorname{Tr}(H^p(0,T;H^s(\Omega)))}$ and is independent of $\hat{m}$ and $\hat{n}$. The exponents $\alpha,\beta>0$ depend on the regularity of $u-g$ and  the spatial dimension $d$; in particular 
\[
\alpha = \frac{s-1}{d-1},
\qquad
\beta = p.
\]
To derive the upper bound for $I_{\mathrm{MC}}$, we evaluate the generic sampling error bound for the two pairs $(u_n, q_{\tilde{v}})$ and $(w_{\tilde{u}}, v_n)$. We first consider the pair $(u_n, q_{\tilde{v}})$. Substituting $u = u_n$ and $v = q_{\tilde{v}}$, where $\tilde{v} = \mathcal F[u_n]-f$, we obtain
\begin{align*}
    D_1(u_n) &\le |\Omega|\|u_n(\cdot,0)-h\|^2_{L^4(\Omega)}\,m_0^{-\frac12},\\
    D_3(u_n, q_{\tilde{v}})
    &\le |\Omega_T| \|q_{\tilde{v}}\|_{L^\infty(\Omega_T)} \|\tilde{v}\|_{L^2(\Omega_T)} m^{-\frac12}.
\end{align*}
Next, for the pair $(w_{\tilde{u}}, v_n)$, we set $u = w_{\tilde{u}} = \tilde{u} + e_u$ and $v = v_n$. Using the identity $\mathcal F[\tilde u]-f = -v_n$ and the linearity of $\mathcal F$, we obtain
\[
    \mathcal F[w_{\tilde{u}}] - f = \mathcal F[e_u] + \mathcal F[\tilde{u}] - f = \mathcal F[e_u] - v_n.
\]
It follows that
\begin{align*}
    D_3(w_{\tilde{u}}, v_n)
    &\le |\Omega_T| \|v_n\|_{L^\infty(\Omega_T)} \|\mathcal F[e_u] - v_n\|_{L^2(\Omega_T)}\,m^{-\frac12} \\
    &\le |\Omega_T| \|v_n\|_{L^\infty(\Omega_T)} \left( \|\mathcal F[e_u]\|_{L^2(\Omega_T)} + \|v_n\|_{L^2(\Omega_T)} \right) m^{-\frac12}.
\end{align*}
Since $\mathcal F$ is a linear parabolic operator ($\mathcal F[u] = u_t + \mathbf b\cdot\nabla u - \epsilon \Delta u$), there exists a constant $C_{\mathcal F} > 0$ such that $\|\mathcal F[e_u]\|_{L^2(\Omega_T)} \le C_{\mathcal F}\|e_u\|_{H^{2,1}(\Omega_T)}$. Thus,
\[
    D_3(w_{\tilde{u}}, v_n) \le |\Omega_T| \|v_n\|_{L^\infty(\Omega_T)} \left( C_{\mathcal F}\|e_u\|_{H^{2,1}(\Omega_T)} + \|v_n\|_{L^2(\Omega_T)} \right) m^{-\frac12}.
\]
Similarly, the $D_1$ term satisfies
\[
    D_1(w_{\tilde{u}})\le |\Omega|\|w_{\tilde{u}}(\cdot,0)-h\|^2_{L^4(\Omega)}\,m_0^{-\frac12}.
\]
Summing the contributions from $D_1, D_2,$ and $D_3$ for both terms, we obtain
\begin{align*}
I_{\mathrm{MC}} 
\;  &\le\;
C_1\,m_0^{-\frac12} + C_2\,\|e_u\|_{H^{2,1}(\Omega_T)}m^{-\frac12} + C_3\,m^{-\frac12} 
+ C_4\,\hat{m}^{-\alpha} \hat{n}^{-\beta},
\end{align*}
where the specific constants are given by:
\begin{align*}
C_1 &= \ |\Omega|\Big(\|u_n(\cdot,0)-h\|^2_{L^4(\Omega)} + \|w_{\tilde{u}}(\cdot,0)-h\|^2_{L^4(\Omega)} \Big), \\[1ex]
C_2 &= |\Omega_T| C_{\mathcal F} \|v_n\|_{L^\infty(\Omega_T)}, \\[1ex]
C_3 &= |\Omega_T| \Big( \|q_{\tilde{v}}\|_{L^\infty(\Omega_T)}\|\tilde{v}\|_{L^2(\Omega_T)} + \|v_n\|_{L^\infty(\Omega_T)}\|v_n\|_{L^2(\Omega_T)} \Big), \\[1ex]
C_4 &= C_d(u_n) + C_d(w_{\tilde{u}}).
\end{align*}
Here, $C_d(u_n)$ and $C_d(w_{\tilde{u}})$ correspond to the fractional Sobolev trace approximation constants from the $D_2$ bounds for $u_n$ and $w_{\tilde{u}}$, respectively.

We summarize the above results in the following theorem. 
\begin{theorem}\label{thm:7}
Let $\Omega \subset \mathbb{R}^d$ be a bounded domain with sufficiently smooth boundary $\partial\Omega$, and let 
$u \in H^{2,1}(\Omega_T)$ be the unique solution of \eqref{eq012},
with $h \in H^1(\Omega) \cap L^4(\Omega)$ and $f \in L^2(\Omega_T)$. Assume further that $u \in H^p\!\left(0,T;\,H^s(\Omega)\right)$ with $s > \frac{d}{2}$ and $p > \frac{1}{2}$. 
Let $(u_n,v_n)\in \mathcal{U}_\theta\times\mathcal{V}_\tau$ be the iterates generated 
after $n$ steps of Algorithm~1. We assume that the parameter spaces defining the neural networks $\mathcal{U}_\theta$ and $\mathcal{V}_\tau$ are uniformly bounded. Furthermore, we assume $\mathcal{U}_\theta$ employs at least $C^{\max(p,s)}(\mathbb{R})$ activation function, and $\mathcal{V}_\tau$ employs an activation function of at least $C^0(\mathbb{R})$. Let $I_{GP}$ be defined as in Theorem~\ref{thm:3}.  Then, almost surely,  
\begin{equation}\label{eq24}
\|u_n - u\|_{L^2(\Omega_T)}^2 
+ \|v_n - v\|_{L^2(\Omega_T)}^2
\;\le\; C\,\mathcal{E},
\end{equation}
where the total error $\mathcal{E}$ is 
\begin{align*}
\mathcal{E}
&=
C_1\, m_0^{-\frac12}
+ C_v \,\|e_v\|_{L^2(\Omega_T)}
+ \big(C_u + C_2 m^{-\frac12}\big)\,\|e_u\|_{H^{2,1}(\Omega_T)}
\\
&\qquad
+\, C_3\,m^{-\frac12}
+ C_4\,\hat{m}^{-\alpha}\hat{n}^{-\beta}
+ I_{GP}.
\end{align*}
Here $C, C_u, C_v, C_1 \dots C_4$ are constants defined in 
the preceding analysis, and $\alpha = \frac{s-1}{d-1}$ and $\beta = p$.
\end{theorem}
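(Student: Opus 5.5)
The plan is to combine the abstract decomposition of Theorem~\ref{thm:4}, in the reduced-regularity form \eqref{eq025d}, with the separate bounds on $I_{NN}$ and $I_{MC}$ derived just before the statement. By Lemma~\ref{lem:LH1-estimate}, the parabolic problem \eqref{eq012} satisfies a stability estimate of the type \eqref{eq025aaaa}. Here $\mathcal{X}=L^2(\Omega_T)$, $\mathcal{Z}=L^2(\Omega_T)$, and $\mathcal{P}$ is the product of $L^2(0,T;H^{1/2}(\partial\Omega))$ and $L^2(\Omega)$ for the initial data. Applying this to the error $e=u_n-u$, which solves \eqref{eq016} by linearity, and repeating the Riesz-map argument of Theorem~\ref{thm:3} with $Z=Z^*=L^2(\Omega_T)$ (so $R_Z$ is the identity, $\tilde v=\mathcal F[u_n]-f$, and $\tilde u$ solves the stated auxiliary problem) gives
\[
\|u_n-u\|_{L^2(\Omega_T)}^2+\|v_n\|_{L^2(\Omega_T)}^2\le C\,(I_{MC}+I_{NN}+I_{GP}).
\]
This reduces the theorem to inserting \eqref{eq21} for $I_{NN}$ and the four-term bound for $I_{MC}$, and collecting constants. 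The coefficient of $\|e_u\|_{H^{2,1}}$ then becomes $C_u+C_2m^{-1/2}$, and the remaining terms $C_1m_0^{-1/2}$, $C_3m^{-1/2}$, $C_4\hat m^{-\alpha}\hat n^{-\beta}$ and $I_{GP}$ appear exactly as in $\mathcal E$.

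The remaining work is to check that every constant is finite under the stated hypotheses.
\begin{itemize}
\item \textbf{Bounded parameters and smooth activations.} Uniform boundedness of the parameter sets, with $C^{\max(p,s)}$ activations for $\mathcal U_\theta$ and continuous activations for $\mathcal V_\tau$ on the bounded set $\overline{\Omega}_T$, gives uniform bounds on $\|u_n\|_{H^p(0,T;H^s)}$, $\|w_{\tilde u}\|_{H^p(0,T;H^s)}$, $\|v_n\|_{L^\infty}$ and $\|q_{\tilde v}\|_{L^\infty}$.
\item \textbf{$C_1$.} This is finite because $u_n(\cdot,0)$ and $w_{\tilde u}(\cdot,0)$ are bounded, $h\in L^4(\Omega)$, and $\Omega$ is bounded.
\item \textbf{$C_3$ and $C_v$.} These need $\|\tilde v\|_{L^2}=\|\mathcal F[u_n]-f\|_{L^2}<\infty$, which holds since $u_n$ is $C^2$ and $f\in L^2(\Omega_T)$.
\item \textbf{$C_4$.} This needs $u_n-g$ and $w_{\tilde u}-g$ to lie in $\operatorname{Tr}(H^p(0,T;H^s(\Omega)))$. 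For the network parts this follows from smoothness. For $g$ one uses $g=u|_\Sigma$ together with the assumption $u\in H^p(0,T;H^s(\Omega))$.
\end{itemize}
With these, the Monte Carlo estimates for $D_1$ and $D_3$ and the fractional-norm quadrature estimate of \cite{bonito_convergence_2025} for $D_2$, with $\alpha=\frac{s-1}{d-1}$ and $\beta=p$, all apply.

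I expect the main obstacle to be the phrase ``almost surely'' and the uniformity of the Monte Carlo bounds. The bounds $D_1\le|\Omega|\sigma_1m_0^{-1/2}$ and $D_3\le|\Omega_T|\sigma_2 m^{-1/2}$ are really law-of-large-numbers or CLT-scale statements for fixed integrands. Here, however, $u_n$ and $v_n$ depend on the samples used in training, and $q_{\tilde v}$ and $w_{\tilde u}$ depend on $u_n$ and $v_n$.

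My first attempt would be to treat the sample sets used in $\widehat{\mathcal L}$ for evaluating $I_{MC}$ as fixed functions evaluated along the iterates. I would then invoke the cited Monte Carlo result as the paper does, for sufficiently large sample sizes and up to the constants $\sigma_i$, and use uniform boundedness of the hypothesis classes to make $\sigma_1$ and $\sigma_2$ uniformly bounded over the classes. Should a cleaner statement be needed, I would replace the pointwise bound by a uniform law of large numbers over the compact, Lipschitz-parametrized classes $\mathcal U_\theta\times\mathcal V_\tau$, which yields an $O(m^{-1/2})$ rate up to the stated constants.

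Finally, I would sum the bounds for the two pairs $(u_n,q_{\tilde v})$ and $(w_{\tilde u},v_n)$ and substitute into the decomposition above to obtain \eqref{eq24}.
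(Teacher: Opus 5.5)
Your proposal follows the paper's own argument: you start from the reduced-regularity decomposition in \eqref{eq025d} (Theorem~\ref{thm:4} with the $L^2$ stability of Lemma~\ref{lem:LH1-estimate}), insert \eqref{eq21} for $I_{NN}$ and the $D_1$, $D_2$, $D_3$ estimates summed over the pairs $(u_n,q_{\tilde v})$ and $(w_{\tilde u},v_n)$, and justify finiteness of the constants from the bounded-parameter and activation assumptions, as in Remark~1. Your doubt about ``almost surely'' is well placed, and the paper does not resolve it either, since it simply asserts the fixed-integrand Monte Carlo bounds; however, your uniform-LLN fallback would not repair it as stated, because even for a fixed integrand with positive variance the law of the iterated logarithm makes the sampling error exceed $K m^{-1/2}$ infinitely often almost surely for every $K$, and uniform concentration over $\mathcal U_\theta\times\mathcal V_\tau$ gives only high-probability bounds whose constants depend on the complexity of the network classes rather than on the $C_1$, $C_3$ of the statement.
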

\textbf{Remark 1.} The uniform boundedness of the error constants $C_u, C_v$ and $C_1, \dots, C_4$ as $n \to \infty$ is a direct consequence of the structural assumptions placed on the neural networks. By restricting the parameter spaces of both $\mathcal{U}_\theta$ and $\mathcal{V}_\tau$ to compact sets, we prevent the weights from diverging to infinity during the inf--sup optimization. For the dual network $v_n$, this parameter bound, combined with a standard $C^0(\mathbb{R})$ activation function, is entirely sufficient to keep the constants $C_v, C_u, C_2$ and $C_3$ finite as $n \to \infty$. For the primal network $u_n$, a $C^{\max(p,s)}(\mathbb{R})$ activation would suffice to keep the constants $C_1$ and $C_4$ finite as $n \to \infty$.

\textbf{Remark 2.} The training loss gap $ I_{GP} $ in (\ref{eq24}) depends on both the optimization algorithm  and the structure of the empirical loss $\hat{\mathcal{L}}$. In practice, $ I_{GP} $ typically decreases with the number of iterations; see \cite{nemirovski_prox-method_2004, lin_gradient_2020} under specific structural assumptions.  
In general nonconvex–nonconcave settings, convergence depends on additional structural assumptions; for instance, \cite{jin_what_2020} shows that gradient descent–ascent (GDA) can converge to a local saddle point of 
$\hat{\mathcal{L}}$.

\textbf{Remark 3.} The total error in ~(\ref{eq24}) is governed by three main factors: the approximation capacity of the neural networks $ \mathcal{U}_\theta $ and $ \mathcal{V}_\tau $, the total number of sampling points $ N $, and the effectiveness of the training algorithm. Specifically:
\begin{itemize}
    \item The terms involving $ \|e_u\|_{H^{2,1}(\Omega_T)} $ and $ \|e_v\|_{L^2(\Omega_T)} $ reflect the intrinsic approximation error determined by the expressive power of the chosen network architectures.
    \item The terms with negative powers of sampling numbers arise from the empirical approximation of the loss function. 
    \item The gap term $ I_{GP} $ reflects the optimization error induced by the training procedure.
\end{itemize}
Therefore, the overall error can be reduced by enhancing the network expressiveness (e.g., increasing depth or width, or adopting more suitable  architectures), increasing the number of sampling points $N$, and employing more effective optimization algorithms.

\textbf{Remark 4.} Theorem~\ref{thm:7} provides theoretical justification of the inf--sup neural network framework applied to a class of linear convection--diffusion equations ~\eqref{eq012}. Although the present analysis is restricted to linear problems, the numerical experiments in  Subsections~6.2--6.4 demonstrate strong empirical performance on several nonlinear PDEs. These results suggest  that the inf--sup formulation remains robust beyond the linear setting.
\section{Numerical Experiments}
 
In this section, we present  numerical experiments to assess the performance of the proposed inf--sup neural network framework for solving both linear and nonlinear parabolic PDEs. For each example, we report the evolution of the empirical loss and its individual components, as well as the absolute and relative $L^2$ error of the neural network solution.  Logarithmic plots are provided  illustrate convergence behavior, along with visual comparisons between the exact and predicted solutions. We also perform systematic validation studies to investigate the effects of key hyperparameters, including network depth and width, spatial dimensionality, and the number of interior and boundary sampling points. The results demonstrate the accuracy, stability, and robustness of the method across different spatial dimensions. Both the primal network $u_\theta$ and the dual network $v_\tau$ are fully connected feedforward neural networks with $\tanh$ activation functions, trained using the RMSprop optimizer \cite{noauthor_tieleman_nodate}, according to the procedure outlined in Algorithm~\ref{alg:inf_sup}.

\subsection{
Convection--Diffusion Equation}

We consider the linear convection--diffusion equation
\begin{align*}
u_t + \mathbf{b}\cdot\nabla u &= \beta\,\Delta u, 
\quad \text{in } \Omega_T, \\
u(\mathbf{x},t) &= g(\mathbf{x},t), 
\quad \text{on } \Sigma,\\
u(\mathbf{x},0) &= f(\mathbf{x}), 
\quad \text{in } \Omega.
\end{align*}
Here,  $\Omega = [0,2\pi]^d$ and  $T = 1$. 
We set $\beta = 10^{-4}$ and choose a  constant velocity 
$
\mathbf{b} = \mathbf{1}_d = (1,\dots,1)\in\mathbb{R}^d.
$
The data are prescribed as 
\[
f(\mathbf{x}) = \sin\!\left(\sum_{i=1}^d x_i\right),
\qquad
g(\mathbf{x},t) = e^{-\beta d t}
\sin\!\left(\sum_{i=1}^d x_i - d t\right),
\]
yielding the exact solution  
\[
u(\mathbf{x}, t) =  e^{-\beta d t}
\sin\!\left(\sum_{i=1}^d x_i - d t\right).
\]
The proposed inf--sup neural network is tested for $d=3$ and $d=5$. The model is trained for $20{,}000$ epochs in three dimensions and $45{,}000$ epochs in five dimensions to ensure convergence. In both cases, $6{,}400$ uniformly sampled space--time collocation points are used for training, together with $640$ boundary points on $\partial\Omega\times(0,T]$ and $640$ initial points on $\Omega\times\{t=0\}$. 
The primal and dual networks $u_\theta$ and $v_\tau$ share the same architecture, consisting of four hidden layers with $64$ neurons per layer. Optimization starts with a learning rate of $10^{-3}$, which is hlaved every $5{,}000$ epochs. 

To visualize high-dimensional solutions, we adopt a two-dimensional slice representation: the learned approximation $u_\theta$ is evaluated on the $(x_1,x_2)$ plane while the remaining spatial coordinates are fixed. This enables qualitative comparison with the exact solution  without loss of interpretability as the spatial dimension increases.

We assess the proposed inf--sup neural network for the linear convection--diffusion equation in dimensions $d=3$ and $d=5$. 
For each case, we report training dynamics, error convergence, and qualitative comparisons with the exact solution. A systematic validation study further examines the influence of dimensionality, sampling density, and network architecture. We also analyze the training behavior of the dual network $v_\tau$.

{\bf Training dynamics and error convergence ($d=3$)}.\\
Figure~\ref{fig:d3-training-linear} shows the evolution of the empirical loss, including its  interior, boundary, and initial components, together with the absolute and relative $L^2$ errors. 
All loss terms decrease steadily,  indicating that the method simultaneously enforces the PDE, boundary, and initial conditions in a stable manner. After $20{,}000$ epochs, the relative $L^2$ error reaches approximately $6\times 10^{-3}$, with an absolute $L^2$ error of about $4\times 10^{-3}$.

\begin{figure}[!htbp]
    \centering
    \begin{subfigure}[t]{0.32\textwidth}
        \centering
        \includegraphics[width=\textwidth]{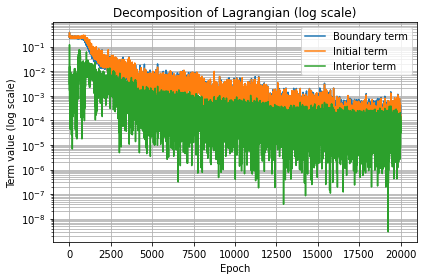}
        \caption{Loss decomposition.}
    \end{subfigure}\hfill
    \begin{subfigure}[t]{0.32\textwidth}
        \centering
        \includegraphics[width=\textwidth]{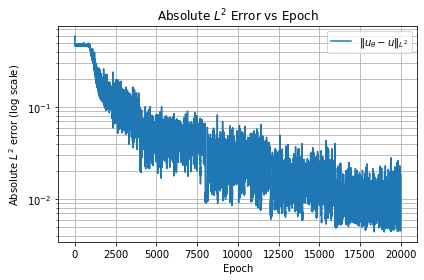}
        \caption{$L^2$ error.}
    \end{subfigure}\hfill
    \begin{subfigure}[t]{0.32\textwidth}
        \centering
        \includegraphics[width=\textwidth]{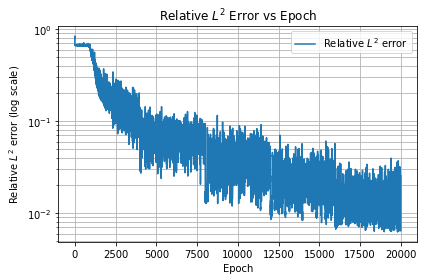}
        \caption{Relative $L^2$ error.}
    \end{subfigure}
    \caption{Training dynamics for the convection--diffusion equation in $d=3$.}
    \label{fig:d3-training-linear}
\end{figure}

{\bf Qualitative solution comparison ($d=3$).}
To visualize the learned solution, we consider two-dimensional slices obtained by varying $(x_1,x_2)\in[0,2\pi]^2$ while fixing $x_3=\pi$, evaluated at the final time $T=1$. 
Figure~\ref{fig:d3-solution} displays the exact solution, the neural network approximation, and the corresponding pointwise absolute error. The learned solution accurately reproduces the spatial structure, with errors that remain small and smoothly distributed. across the domain. 

\begin{figure}[H]
    \centering
    \includegraphics[width=0.95\textwidth]{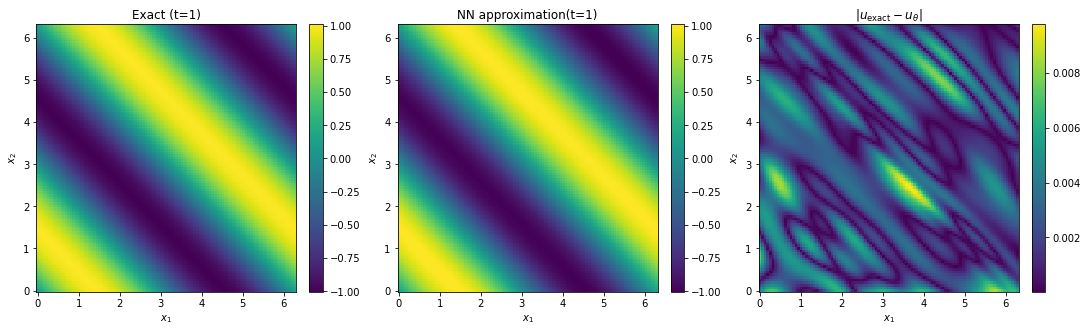}
    \caption{
    Solution comparison for the $d=3$ convection--diffusion equation at $t=1$.
    From left to right: exact solution $u(\cdot,1)$, neural network approximation $u_\theta(\cdot,1)$, and pointwise absolute error $|u-u_\theta|$.
    The solutions are evaluated on the $(x_1,x_2)$ slice with $x_3=\pi$.
    }
    \label{fig:d3-solution}
\end{figure}

{\bf Training dynamics and error convergence ($d=5$).}
Figure~\ref{fig:d5-training-linear} shows the training curves for the five higher-dimensional case. 
As expected, convergence is slower in higher dimensions under the same network architecture and sampling strategy. Nevertheless, after $45{,}000$ epochs, the method achieves a relative $L^2$ error of $7.54\times 10^{-3}$ and an absolute $L^2$ error of $5.35\times 10^{-3}$.

\begin{figure}[H]
    \centering
    \begin{subfigure}[t]{0.32\textwidth}
        \centering
        \includegraphics[width=\textwidth]{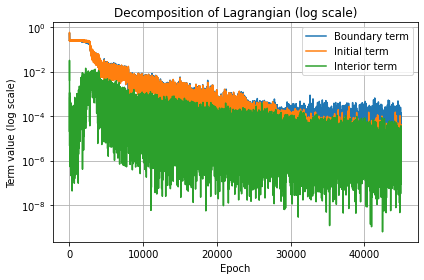}
        \caption{Loss decomposition.}
    \end{subfigure}\hfill
    \begin{subfigure}[t]{0.32\textwidth}
        \centering
        \includegraphics[width=\textwidth]{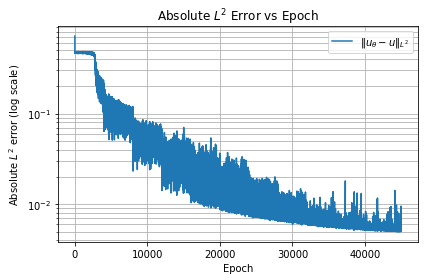}
        \caption{$L^2$ error.}
    \end{subfigure}\hfill
    \begin{subfigure}[t]{0.32\textwidth}
        \centering
        \includegraphics[width=\textwidth]{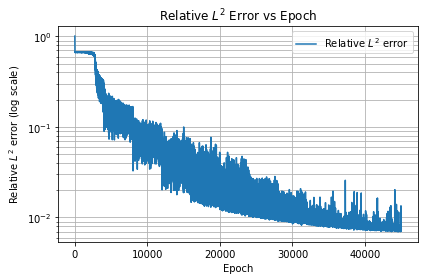}
        \caption{Relative $L^2$ error.}
    \end{subfigure}
    \caption{Training dynamics for the convection--diffusion equation in $d=5$.}
    \label{fig:d5-training-linear}
\end{figure} 

{\bf Qualitative solution comparison ($d=5$).}
For the five dimensional case, we again use a two-dimensional slice representation by varying $(x_1,x_2)\in[0,2\pi]^2$ while fixing $x_3=x_4=x_5=\pi$, evaluated  at $T=1$. As shown in Figure~\ref{fig:d5-solution},  the learned solution remains accurate,  and the pointwise absolute error is uniformly small across the slice. 
\begin{figure}[!htbp]
    \centering
\includegraphics[width=0.95\textwidth]{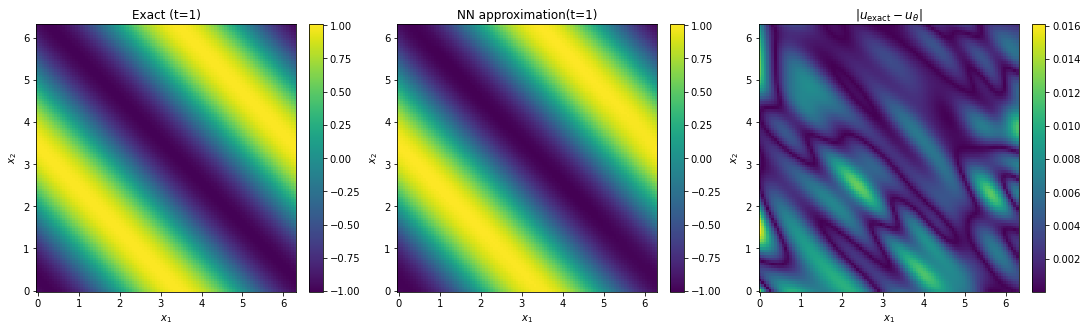}
    \caption{
    Solution comparison for the $d=5$ convection--diffusion equation at $t=1$.
    From left to right: exact solution $u(\cdot,1)$, neural network approximation $u_\theta(\cdot,1)$, and pointwise absolute error $|u-u_\theta|$.
    The solutions are evaluated on the $(x_1,x_2)$ slice while fixing $x_3=x_4=x_5=\pi$.
    }
    \label{fig:d5-solution}
\end{figure}

{ \bf Validation studies.}\\
We assess robustness through four validation experiments summarized in Figure~\ref{fig:validation}, investigating the effects of (i) sampling density, (ii) spatial dimension, (iii) network depth, and (iv) network width. 
The results show that an intermediate sampling density yields the best convergence for a fixed training budget. Higher spatial dimensions slow convergence, deeper networks enhance it, and increasing width improves performance up to a point, beyond which larger models may require additional training or more data to fully realize their capacity.

\begin{figure}[!htbp]
    \centering
    \begin{subfigure}[t]{0.48\textwidth}
        \centering
        \includegraphics[width=\textwidth]{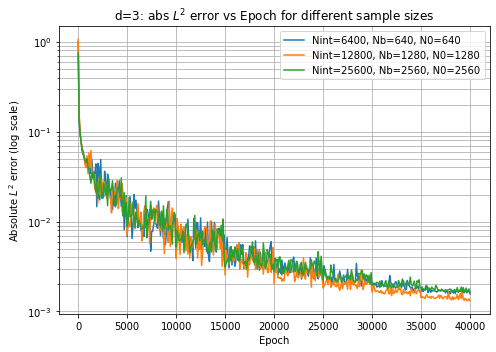}
        \caption{Sampling density.}
    \end{subfigure}\hfill
    \begin{subfigure}[t]{0.48\textwidth}
        \centering
        \includegraphics[width=\textwidth]{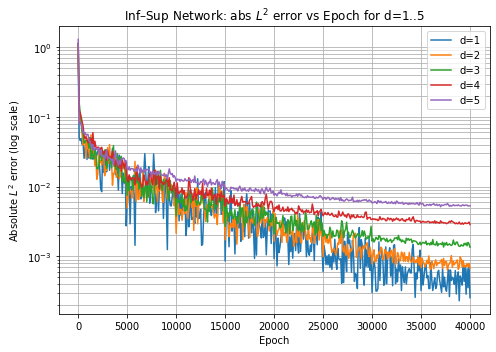}
        \caption{Dimension $d=1,\dots,5$.}
    \end{subfigure}

    \vspace{0.6em}

    \begin{subfigure}[t]{0.48\textwidth}
        \centering
        \includegraphics[width=\textwidth]{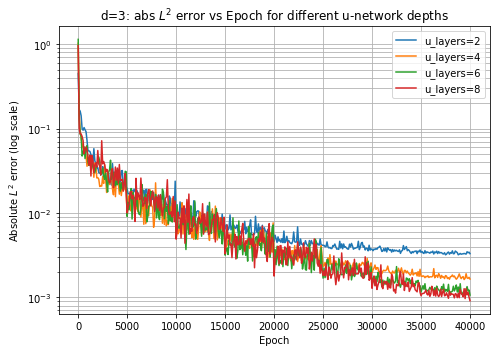}
        \caption{Depth (layers).}
    \end{subfigure}\hfill
    \begin{subfigure}[t]{0.48\textwidth}
        \centering
        \includegraphics[width=\textwidth]{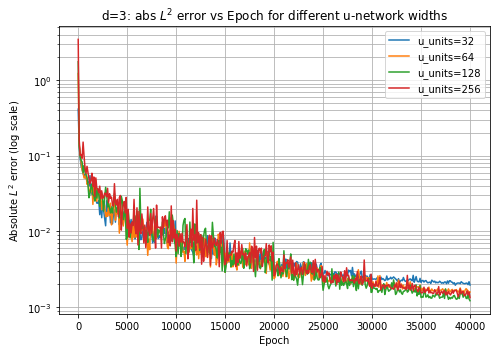}
        \caption{Width (neurons per layer).}
    \end{subfigure}
    \caption{Validation studies.}
    \label{fig:validation}
\end{figure}

{\bf Behavior of the dual network.}
Finally, Figure~\ref{fig:vtau} shows the evolution of the $L^2$ norm of the dual network $v_\tau$ (log scale) for $d=3$ over $40{,}000$ epochs. The norm decreases rapidly during early training and subsequently oscillates near zero, reflecting  the saddle-point structure: the dual variable acts as an adversarial witness enforcing the PDE constraint and remains small once the primal network accurately approximates the solution. 

\begin{figure}[H]
    \centering
    \includegraphics[width=0.62\textwidth]{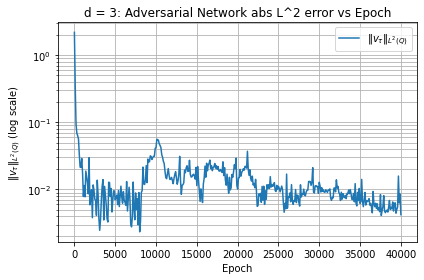}
    \caption{Evolution of the adversarial network $v_\tau$ in $d=3$ ($L^2$ norm over $40{,}000$ epochs).}
    \label{fig:vtau}
\end{figure}
Overall, the numerical experiments demonstrate that the proposed inf--sup neural network provides stable training, accurate approximations, and robust performance across varying spatial dimensions, sampling densities, and network architectures.

\subsection{
Nonlinear Reaction-Diffusion}
In this subsection, we present experimental results for the nonlinear reaction-diffusion equation. Although a rigorous theoretical analysis of the proposed inf--sup network in the nonlinear setting has not yet been established, our experiments indicate that the method effectively produces accurate solutions. 

We consider the equation:
\begin{align*}
u_t - \Delta u - u^2 - h &= 0, \quad \text{in } \Omega_T, \\
u(\mathbf{x},t) &= g(\mathbf{x}, t), \quad \text{on } \Sigma,\\
u(\mathbf{x},0) &= f(\mathbf{x}), \quad \text{in } \Omega, 
\end{align*}
where the spatial domain is $\Omega = [0,1]^d$ and the final time  is $T = 1$. The source term $h$, Dirichlet boundary condition $g$, and initial condition $f$ are defined by 
\[\begin{cases}
    h(\mathbf{x}, t) =  \left( \frac{\pi}{2} \right)^d \left( \left( \frac{\pi^2d}{2} - 2 \right) e^{-t} \prod_{i=1}^d \sin \left( \frac{\pi}{2}x_i + \frac{\pi i}{2} \right) - 4\left(\frac{\pi}{2} \right)^de^{-2t} \prod_{i=1}^d \sin^2 \left( \frac{\pi}{2}x_i + \frac{\pi i}{2} \right) \right), \\[10pt]
    g(\mathbf{x}, t) =  \left( \frac{\pi}{2} \right)^d 2e^{-t} \prod_{i=1}^d \sin \left( \frac{\pi}{2}x_i + \frac{\pi i}{2} \right), \\[10pt]
    f(\mathbf{x}) =  \left( \frac{\pi}{2} \right)^d 2 \prod_{i=1}^d \sin \left( \frac{\pi}{2}x_i + \frac{\pi i}{2} \right).
\end{cases}\]
It can be readily verified that the solution is 
\[
u(\mathbf{x}, t) = \left( \frac{\pi}{2} \right)^d 2e^{-t} \prod_{i=1}^d \sin \left( \frac{\pi}{2}x_i + \frac{\pi i}{2} \right).
\]
For $d=5$, the model is trained for $35{,}000$ epochs using $6{,}400$ interior points, $640$ boundary points, and $640$ initial points.
Both the primal and dual networks consist of four hidden layers with $64$ neurons per layer.  
For $d=20$, the training budget is increased to $80{,}000$ epochs, with $12{,}800$ interior points, $1{,}280$ boundary points, and $1{,}280$ initial points. The network architecture is correspondingly expanded eight hidden layers with $128$ neurons per layer. 
In all cases, training is performed using the RMSprop optimizer with an initial learning rate of $10^{-3}$ and stepwise decay. 

Figure~\ref{fig:rd-d5-training} illustrates the evolution of the empirical loss  and its individuala components, along with the absolute and relative $L^2$ errors for the $d=5$ case.
All components of the loss --interior residual, boundary mismatch, and initial condition term -- decay steadily toward zero as training progresses. After $35{,}000$ epochs, the relative $L^2$ error attains approximately $7.73\times 10^{-3}$, while the absolute $L^2$ error is about $1.65\times 10^{-2}$.

\begin{figure}[H]
    \centering
    \begin{subfigure}[t]{0.32\textwidth}
        \centering
        \includegraphics[width=\textwidth]{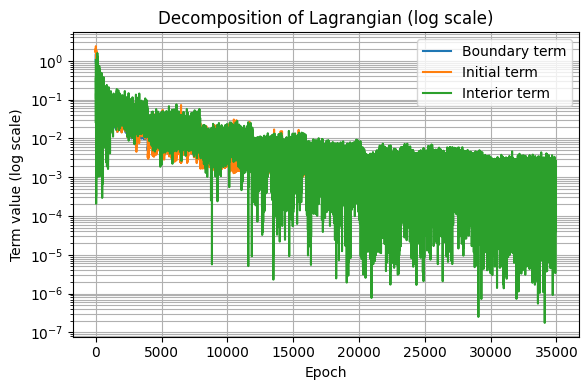}
        \caption{Loss decomposition.}
    \end{subfigure}\hfill
    \begin{subfigure}[t]{0.32\textwidth}
        \centering
        \includegraphics[width=\textwidth]{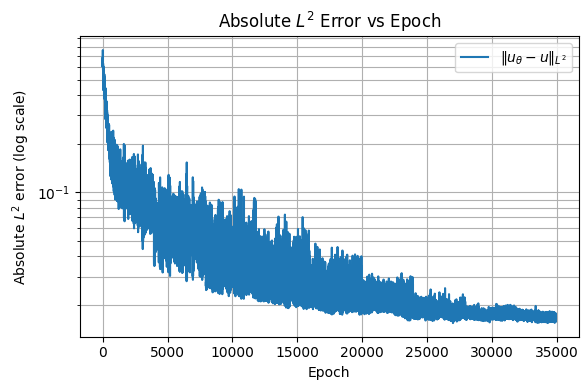}
        \caption{$L^2$ error.}
    \end{subfigure}\hfill
    \begin{subfigure}[t]{0.32\textwidth}
        \centering
        \includegraphics[width=\textwidth]{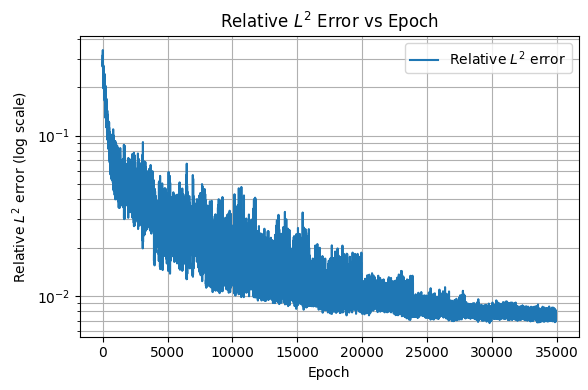}
        \caption{Relative $L^2$ error.}
    \end{subfigure}
    \caption{Training dynamics for the nonlinear reaction--diffusion equation in $d=5$.}
    \label{fig:rd-d5-training}
\end{figure}

To visualize the learned solution, we evaluate $u_\theta$ on a two-dimensional slice by varying $(x_1,x_2)\in[0,2\pi]^2$ while fixing $x_3=\cdots=x_5=0.5$ at the final time $t=1$.
Figure~\ref{fig:rd-d5-solution} compares the exact solution, the neural network approximation, and the corresponding pointwise absolute error. The results demonstrate that the neural network accurately reproduces the spatial structure of the solution, with errors that are small in magnitude and smoothly distributed across the domain. 
\begin{figure}[!htbp]
    \centering
    \includegraphics[width=0.95\textwidth]{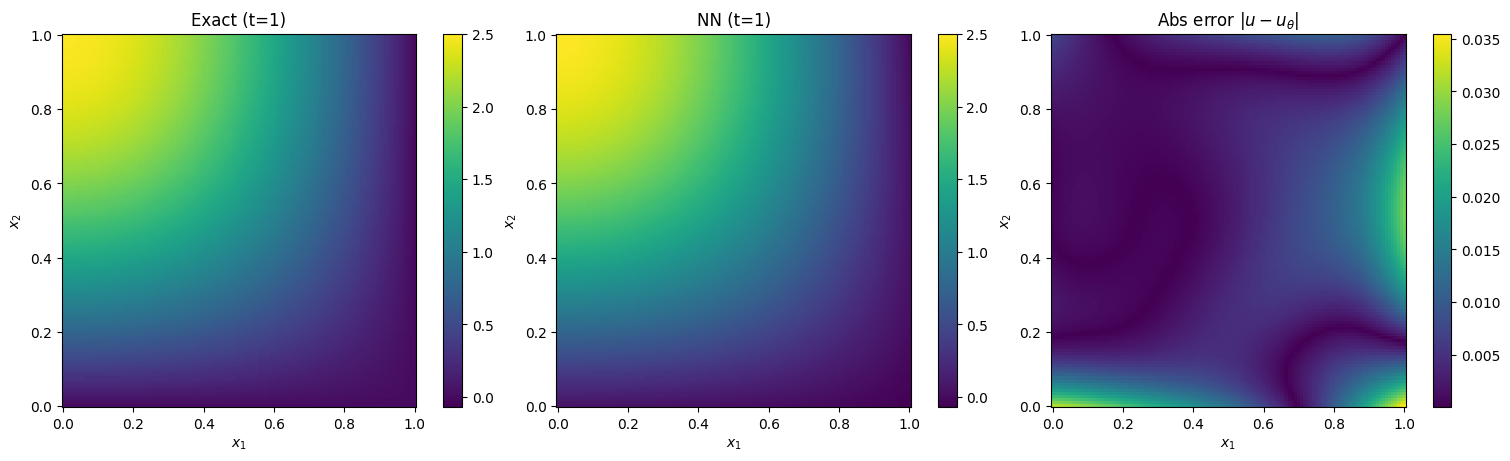}
    \caption{
    Solution comparison for the nonlinear reaction--diffusion equation in $d=5$ at $t=1$.
    From left to right: exact solution, neural network approximation, and pointwise absolute error.
    The solution is evaluated on the $(x_1,x_2)$ slice while fixing $x_3=\cdots=x_5=0.5$.
    }
    \label{fig:rd-d5-solution}
\end{figure}

Figure~\ref{fig:rd-d20-training} and Figure~\ref{fig:rd-d20-solution} present the analogous results for the case $d=20$.
Despite the significantly inreqased dimensionality and nonlinearity, the proposed inf--sup network remains stable and  exhibits clear convergence behavior. All loss components decrease steadily toward zero, and both the absolute and relative $L^2$ errors decay consistently through training.  
After $80{,}000$ epochs, the relative $L^2$ error reaches approximately $5.62\times 10^{-3}$, while the absolute $L^2$ error is reduced to about $2.26\times 10^{-2}$.

\begin{figure}[H]
    \centering
    \begin{subfigure}[t]{0.32\textwidth}
        \centering
        \includegraphics[width=\textwidth]{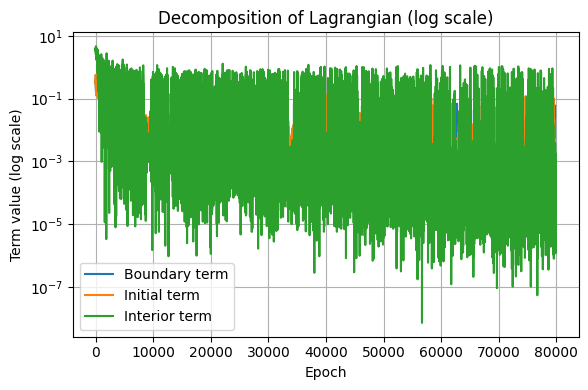}
        \caption{Loss decomposition.}
    \end{subfigure}\hfill
    \begin{subfigure}[t]{0.32\textwidth}
        \centering
        \includegraphics[width=\textwidth]{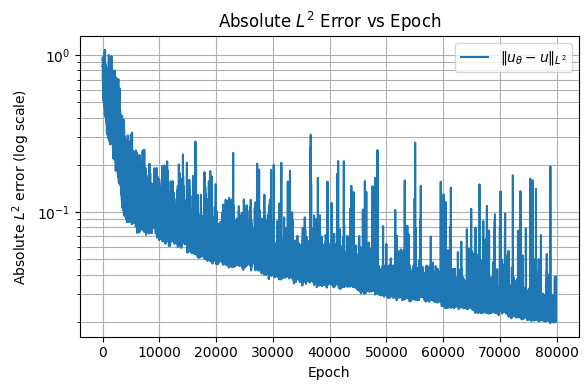}
        \caption{$L^2$ error.}
    \end{subfigure}\hfill
    \begin{subfigure}[t]{0.32\textwidth}
        \centering
        \includegraphics[width=\textwidth]{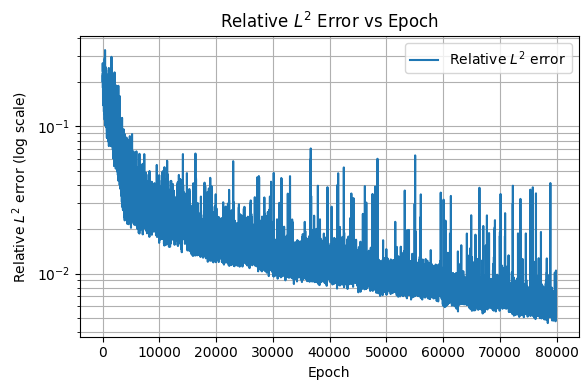}
        \caption{Relative $L^2$ error.}
    \end{subfigure}
    \caption{Training dynamics for the nonlinear reaction--diffusion equation in $d=20$.}
    \label{fig:rd-d20-training}
\end{figure}

\begin{figure}[H]
    \centering
    \includegraphics[width=0.95\textwidth]{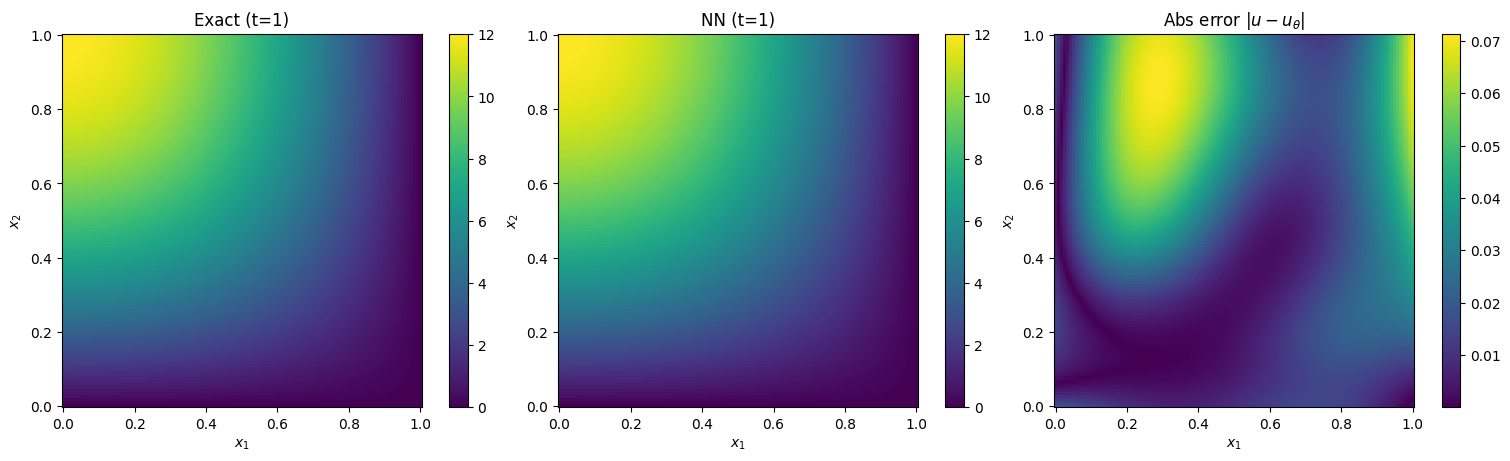}
    \caption{
    Solution comparison for the nonlinear reaction--diffusion equation in $d=20$ at $t=1$.
    From left to right: exact solution, neural network approximation, and pointwise absolute error.
    The solution is evaluated on the $(x_1,x_2)$ slice with $x_3=\cdots=x_{20}=0.5$.
    }
    \label{fig:rd-d20-solution}
\end{figure}

\subsection{
One-Dimensional Viscous Burgers' and Allen--Cahn Equations}

In this subsection, we assess the
performance of the 
proposed inf--sup neural network on two nonlinear benchmark problems 
characterized by sharp gradients and pronounced ramp–cliff structures: the viscous Burgers' equation and the Allen--Cahn equation. These test cases are designed  to evaluate the capability of the method to accurately resolve localized steep transitions and nonlinear dynamics. 

We first consider the viscous Burgers' equation subject to the initial condition and homogeneous Dirichlet boundary conditions: 
\begin{align*}
u_t + u\,u_x - \frac{0.01}{\pi}\,u_{xx} &= 0,
\quad x \in [-1,1],\; t \in (0,1], \\
u(x,0) &= -\sin(\pi x),
\quad x \in [-1,1], \\
u(-1,t) &= u(1,t) = 0,
\quad t \in [0,1].
\end{align*}
Next, we consider the Allen--Cahn equation with initial and periodic boundary conditions: 
\begin{align*}
u_t - 10^{-4} u_{xx} + 5u^3 - 5u &= 0,
\quad x \in [-1,1],\; t \in (0,1], \\
u(x,0) &= x^2 \cos(\pi x),
\quad x \in [-1,1], \\
u(-1,t) &= u(1,t), \\
u_x(-1,t) &= u_x(1,t),
\quad t \in (0,1].
\end{align*}
For both problems, we use $12{,}800$ interior collocation points, $1{,}280$ boundary points, and $1{,}280$ initial-condition samples. The primal and dual networks each comprise four hidden layers with $64$ neurons per layer. Training is carried out using the  RMSprop optimizer with an initial learning rate of $10^{-3}$. 

For the viscous Burgers' equation, the model is trained for $40{,}000$ epochs,  with learning rate decayed every $5{,}000$ epochs. For the Allen--Cahn equation, training is extended to $100{,}000$ epochs,  with learning rate decay applied every $10{,}000$ epochs.

For the one-dimensional viscous Burgers’ equation, Figure~\ref{fig:burgers-training} illustrates the evolution of the empirical loss and its decomposition into interior, boundary, and initial components, along with the absolute and relative $L^2$ errors during training. All loss components  decrease steadily toward zero, indicating that the inf--sup formulation effectively enforces the PDE residual as well as the boundary and initial conditions during the optimization process. 

After $40{,}000$ training epochs, the relative $L^2$ error is reduced to approximately $5.6\times 10^{-3}$, while the absolute $L^2$ error reaches about $2.8\times 10^{-3}$. Despite the pronounced  ramp–cliff structure characteristic of the viscous Burgers’ solution, the neural network successfully  resolves the steep gradients without generating spurious oscillations. These results demonstrate the stability and robustness of the proposed method
in capturing sharp transition layers. 

\begin{figure}[H]
    \centering
    \begin{subfigure}[t]{0.32\textwidth}
        \centering
        \includegraphics[width=\textwidth]{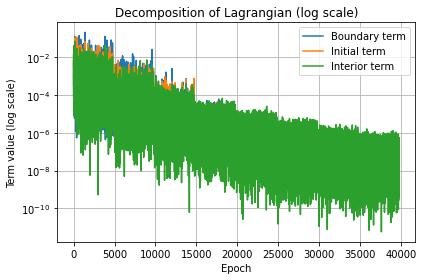}
        \caption{Loss decomposition.}
    \end{subfigure}\hfill
    \begin{subfigure}[t]{0.32\textwidth}
        \centering
        \includegraphics[width=\textwidth]{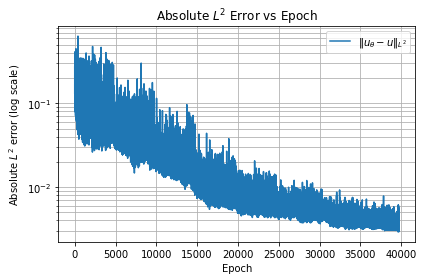}
        \caption{$L^2$ error.}
    \end{subfigure}\hfill
    \begin{subfigure}[t]{0.32\textwidth}
        \centering
        \includegraphics[width=\textwidth]{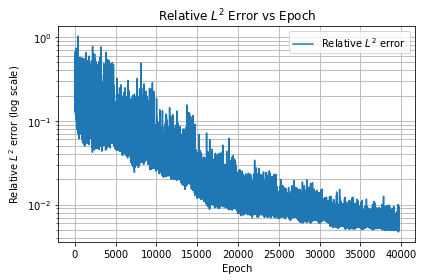}
        \caption{Relative $L^2$ error.}
    \end{subfigure}
    \caption{Training dynamics for the one-dimensional viscous Burgers' equation.}
    \label{fig:burgers-training}
\end{figure}

To further evaluate the approximation accuracy, Figure~\ref{fig:burgers-solution} presents a comparison between the exact solution and the neural network approximation at $t=1$. The predicted solution exhibits excellent agreement with the reference solution across the entire spatial domain, accurately resolving both the smooth regions and the sharp transition layer without noticeable distortion.

\begin{figure}[!htbp]
    \centering
    \includegraphics[width=0.7\textwidth]{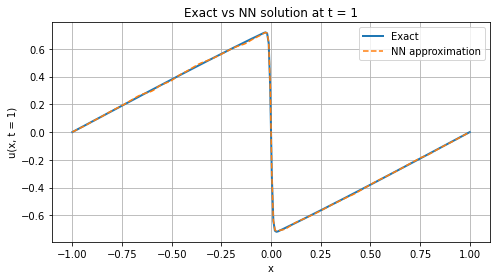}
    \caption{
    Exact solution and neural network approximation of the viscous Burgers' equation at $t=1$.
    }
    \label{fig:burgers-solution}
\end{figure}

Figure ~\ref{fig:ac-training} and Figure ~\ref{fig:ac-solution} summarize the numerical results for the Allen--Cahn equation. As in  the Burgers case, the interior, boundary, and initial components of the empirical loss all decrease steadily during training, indicating that the inf–sup formulation enforces the nonlinear PDE constraint in a stable and consistent manner.

Owning to the stiff cubic reaction term, a longer training horizon is required. Nevertheless, the training process remains stable, without noticeable oscillations or divergence in either the primal or dual variables. After $100{,}000$ epochs, the relative $L^2$ error reaches approximately $8.62\times 10^{-3}$, with an absolute $L^2$ error of about $6.16\times 10^{-3}$.

The comparison at $t=1$ further shows that the neural network accurately captures the phase-transition structure and the smooth interfacial layers characteristic of the Allen--Cahn equation dynamics. The pointwise errors remain small and uniformly distributed across the domain, indicating that the inf--sup network effectively resolves  nonlinear reaction mechanisms,  even in the presence of stiffness.  
\begin{figure}[!htbp]
    \centering
    \begin{subfigure}[t]{0.32\textwidth}
        \centering
        \includegraphics[width=\textwidth]{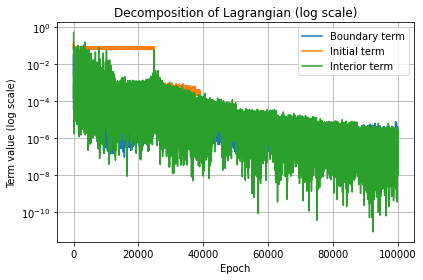}
        \caption{Loss decomposition.}
    \end{subfigure}\hfill
    \begin{subfigure}[t]{0.32\textwidth}
        \centering
        \includegraphics[width=\textwidth]{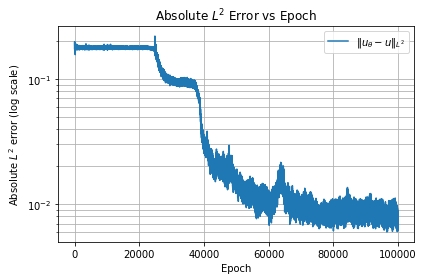}
        \caption{$L^2$ error.}
    \end{subfigure}\hfill
    \begin{subfigure}[t]{0.32\textwidth}
        \centering
        \includegraphics[width=\textwidth]{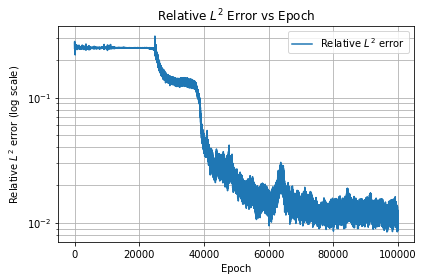}
        \caption{Relative $L^2$ error.}
    \end{subfigure}
    \caption{Training dynamics for the one-dimensional Allen--Cahn equation.}
    \label{fig:ac-training}
\end{figure}

\begin{figure}[H]
    \centering
    \includegraphics[width=0.7\textwidth]{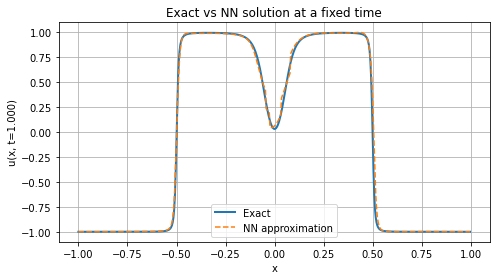}
    \caption{
    Exact solution and neural network approximation of the Allen--Cahn equation at $t=1$.
    }
    \label{fig:ac-solution}
\end{figure}
These one-dimensional experiments demonstrate that the inf--sup neural network effectively resolves  nonlinear convection, diffusion, and stiff reaction mechanisms. Even in the presence of ramp–cliff structures and sharp transition layers, the method achieves stable training, consistent loss decay, and accurate solution approximation. Combined with the higher-dimensional experimental results presented earlier, these findings further confirm the robustness of the inf--sup framework across a broad class of linear and nonlinear PDEs.

\section{Discussion and Outlook} 
In this work, we extend the inf--sup neural network framework introduced in \cite{huo_inf-sup_2024} for elliptic problems to a broader class of PDEs, thereby enhancing both its generality and practical applicability. The proposed methodology follows a two-stage strategy. First, the original PDE is reformulated as a saddle-point (inf--sup) optimization problem, and we establish its theoretical equivalence for classical solutions under suitable assumptions. Second, the resulting min--max problem is solved using two fully connected neural networks: a primal network approximating the PDE solution and a dual  network representing the associated Lagrange multiplier, trained via an alternating gradient descent--ascent.

From a theoretical standpoint, we derive an error decomposition of the form 
\[
\text{Total Error} \;\le\; I_{\mathrm{NN}} + I_{\mathrm{MC}} + I_{\mathrm{GP}},
\]
where $I_{\mathrm{NN}}$ denotes the neural network approximation error, $I_{\mathrm{MC}}$ the Monte Carlo integration error induced by empirical discretization, and $I_{\mathrm{GP}}$ the optimization gap associated with the saddle-point training procedure. 
We show that each component can be controlled under reasonable conditions, including sufficient expressive capacity of the neural networks in appropriate Sobolev-type space--time norms, adequate sampling density for Monte Carlo approximation, and a suitable optimization strategy for the inf--sup problem.

The effectiveness of the proposed framework is further
 demonstrated through  numerical experiments on time-dependent parabolic equations in both linear and nonlinear regimes, confirming that the inf--sup neural network delivers stable and accurate approximations.

Despite these encouraging results, several limitations of the current framework remain. The analysis assumes the existence and uniqueness of sufficiently regular PDE solutions, and the error estimates are primarily derived for linear problems. In addition, numerical performance depends on the choice of boundary norms and  the efficiency of the saddle-point optimization algorithm, introducing trade-offs between theoretical accuracy and practical implementation.

Future work should address several key directions. These include refining the inf–sup formulation via enhanced regularization and alternative boundary enforcement strategies, extending the framework to PDEs with weak or low-regularity solutions, and developing rigorous error analyses for nonlinear operators. On the computational side, improving optimization algorithms tailored to saddle-point structures is essential for reducing the optimization gap and accelerating convergence. Collectively,  these efforts aim to strengthen the theoretical foundations of inf--sup neural networks while enhancing their robustness and efficiency for a broad class of complex PDEs.


\bibliography{ref}
\bibliographystyle{amsplain}

\appendix

\renewcommand{\theequation}{\thesection.\arabic{equation}}
\section{Proof of Lemma~\ref{lem:LH1-estimate}}
\begin{proof}
We employ the method of transposition. Let $\psi \in L^2(\Omega_T)$ be an arbitrary test function. Consider the backward parabolic (adjoint) problem:
\begin{align}
    -\phi_t - \epsilon \Delta \phi - \nabla \cdot (\mathbf{b} \phi) &= \psi \quad \text{in } \Omega_T, \label{eqA1} \\
    \phi &= 0 \quad \text{on } \partial \Omega_T, \nonumber \\
    \phi(x,T) &= 0 \quad \text{in } \Omega. \nonumber
\end{align}

Since the terminal data is zero and the coefficients are smooth, standard parabolic regularity theory ensures that there exists a unique strong solution $\phi \in H^{2,1}(\Omega_T)$. Furthermore, there exists a constant $C > 0$ (depending on $\Omega, T, \epsilon$, and $\mathbf{b}$) such that
\[
    \|\phi\|_{H^{2,1}(\Omega_T)} \le C \|\psi\|_{L^2(\Omega_T)}.
\]
Because $\phi \in H^{2,1}(\Omega_T)$, we have the continuous embedding $\phi \in C([0,T]; H^1_0(\Omega))$, which gives
\[
    \|\phi(\cdot, 0)\|_{L^2(\Omega)} \le C \|\psi\|_{L^2(\Omega_T)}.
\]
Additionally, the trace theorem yields that the normal derivative on the boundary satisfies
\[
    \left\| \frac{\partial \phi}{\partial \mathbf{n}} \right\|_{L^2(0,T; H^{\frac{1}{2}}(\partial\Omega))} \le C \|\phi\|_{L^2(0,T; H^2(\Omega))} \le C \|\psi\|_{L^2(\Omega_T)}.
\]

Now, we multiply the original PDE $u_t + \mathbf{b} \cdot \nabla u - \epsilon \Delta u = f$ by $\phi$ and integrate over $\Omega_T$:
\[
    \int_{\Omega_T} (u_t + \mathbf{b} \cdot \nabla u - \epsilon \Delta u) \phi \, dx \, dt = \int_{\Omega_T} f \phi \, dx \, dt.
\]
We apply integration by parts (Green's identities) to shift all derivatives onto $\phi$. 
For the time derivative, using $\phi(x,T) = 0$ and $u(x,0) = h(x)$, we have
\[
    \int_{\Omega_T} u_t \phi \, dx \, dt = - \int_\Omega h(x) \phi(x,0) \, dx - \int_{\Omega_T} u \phi_t \, dx \, dt.
\]
For the advection term, using $\phi = 0$ on $\partial \Omega_T$,
\[
    \int_{\Omega_T} (\mathbf{b} \cdot \nabla u) \phi \, dx \, dt = - \int_{\Omega_T} u \nabla \cdot (\mathbf{b} \phi) \, dx \, dt.
\]
For the diffusion term, applying Green's second identity and using $u = g$ and $\phi = 0$ on $\partial \Omega_T$,
\[
    -\epsilon \int_{\Omega_T} (\Delta u) \phi \, dx \, dt = - \epsilon \int_{\Omega_T} u \Delta \phi \, dx \, dt + \epsilon \int_{\partial\Omega_T} g \frac{\partial \phi}{\partial \mathbf{n}} \, ds \, dt.
\]

Combining these and substituting them into the integral equation yields
\begin{align*}
    \int_{\Omega_T} u \Big( -\phi_t - \nabla \cdot (\mathbf{b} \phi) - \epsilon \Delta \phi \Big) \, dx \, dt &= \int_{\Omega_T} f \phi \, dx \, dt + \int_\Omega h(x) \phi(x,0) \, dx \\
    &\quad - \epsilon \int_{\partial\Omega_T} g \frac{\partial \phi}{\partial \mathbf{n}} \, ds \, dt.
\end{align*}
Using the fact that $\phi$ solves the adjoint problem~\eqref{eqA1}, the bracketed term on the left-hand side simplifies perfectly to $\psi$. Thus,
\[
    \int_{\Omega_T} u \psi \, dx \, dt = \int_{\Omega_T} f \phi \, dx \, dt + \int_\Omega h(x) \phi(x,0) \, dx - \epsilon \int_{\partial\Omega_T} g \frac{\partial \phi}{\partial \mathbf{n}} \, ds \, dt.
\]

Taking the absolute value and applying the Cauchy--Schwarz inequality (and the duality pairing on the boundary), we obtain
\begin{align*}
    \left| \int_{\Omega_T} u \psi \, dx \, dt \right| &\le \|f\|_{L^2(\Omega_T)} \|\phi\|_{L^2(\Omega_T)} + \|h\|_{L^2(\Omega)} \|\phi(\cdot,0)\|_{L^2(\Omega)} \\
    &\quad + \epsilon \|g\|_{L^2(0,T; H^{\frac{1}{2}}(\partial\Omega))} \left\| \frac{\partial \phi}{\partial \mathbf{n}} \right\|_{L^2(0,T; H^{-\frac12}(\partial\Omega))}.
\end{align*}
Since $H^{\frac{1}{2}}(\partial\Omega) \subset H^{-\frac12}(\partial\Omega)$, we can bound the normal derivative trace strictly by $C\|\psi\|_{L^2(\Omega_T)}$ as established earlier. Applying the bounds for $\phi$ gives
\[
    \left| \int_{\Omega_T} u \psi \, dx \, dt \right| \le C \left( \|f\|_{L^2(\Omega_T)} + \|h\|_{L^2(\Omega)} + \|g\|_{L^2(0,T; H^{\frac{1}{2}}(\partial\Omega))} \right) \|\psi\|_{L^2(\Omega_T)}.
\]
Because this inequality holds for any arbitrary test function $\psi \in L^2(\Omega_T)$, we conclude by Riesz representation/duality that $u \in L^2(\Omega_T)$ and satisfies
\[
    \|u\|_{L^2(\Omega_T)} \le C \left( \|f\|_{L^2(\Omega_T)} + \|g\|_{L^2(0,T; H^{\frac{1}{2}}(\partial\Omega))} + \|h\|_{L^2(\Omega)} \right).
\]
This establishes the estimate.
\end{proof}

\end{document}

%% file: math_commands.tex
\def\eqref#1{equation~\ref{#1}}

\def\1{\bm{1}}

\DeclareMathAlphabet{\mathsfit}{\encodingdefault}{\sfdefault}{m}{sl}
\SetMathAlphabet{\mathsfit}{bold}{\encodingdefault}{\sfdefault}{bx}{n}

